\newtheorem{theorem}{Theorem}
\newtheorem{proposition}{Proposition}[section]
\newtheorem{question}{Question}[section]
\newtheorem{corollary}{Corollary}
\theoremstyle{definition}
\newtheorem{lemma}{Lemma}[section]
\newtheorem{example}{Example}[section]
\theoremstyle{remark}
\newtheorem{remark}{Remark}
\theoremstyle{claim}
\newtheorem{claim}{Claim}[section]
\theoremstyle{theoremA}
\theoremstyle{theoremB}
\newcommand{\real}{\mathbb{R}}
\newcommand{\Q}{\mathbb{Q}^n_c}
\newcommand{\hy}{\mathbf{H}}
\newcommand{\Si}{\Sigma}
\newcommand{\te}{\theta}
\newcommand{\al}{\alpha}
\newcommand{\om}{\omega}
\newcommand{\na}{\nabla}
\newcommand{\m}{\mathcal}
\newcommand{\ep}{\epsilon}
\newcommand{\ga}{\gamma}
\newcommand{\be}{\beta}
\newcommand{\de}{\delta}
\newcommand{\ti}{\tilde}
\newcommand{\p}{\partial}
\newcommand{\g}{\gamma}
\newcommand{\lf}{\left}
\newcommand{\rg}{\right}
\newcommand{\fr}{\frac}
\newcommand{\pr}{\pi_{_W}}
\title[Isometry actions and geodesics orthogonal to submanifolds]{Isometry actions and geodesics orthogonal to submanifolds}
\author[A. Di Scala]{Antonio J. Di Scala}
\address{Dipartimento di Matematica,
Politecnico di Torino,
Corso Duca degli Abruzzi 24, 10129 Torino, Italy}
\email{antonio.discala@polito.it}
\author[S. Mendon\c ca]{S\'ergio Mendon\c ca}
\address{Departamento de An\'alise, Instituto de Matem\'atica,
Universidade Federal Fluminense, Niter\'oi, RJ, CEP 24020-140,
Brasil} \email{sergiomendonca@id.uff.br}
\author[H. Mirandola]{Heudson Mirandola}
\address{Departamento de M\'etodos Matem\'aticos, Instituto de Matem\'atica, Universidade Fe\-deral do Rio de Janeiro, Rio de Janeiro, RJ, CEP 21945-970, Brasil} \email{heudson@im.ufrj.br}
\thanks{This work is partially supported by CNPq.}
\subjclass[2000]{Primary 53C20; Secondary 53C42}
\author[G. Ruiz-Hern\'andez]{Gabriel Ruiz-Hern\'andez}
\address{Instituto de Matem\'aticas, Universidad Nacional Autonoma de M\'exico, Ciudad Universitaria,
C.P. 04510 Mexico City, D.F., Mexico}
\email{gruiz@matem.unam.mx}
\begin{document}
\maketitle
\section{\bf Introduction}
A simple well-known fact says that if $f:\Si \to \real^{n}$
is
 an immersion satisfying that at each point of $f(\Si)$ there exists a normal
 line intersecting a fixed point $p\in\real^n$ then $f(\Si)$ is contained in a
 round sphere centered at $p$. In this paper we will provide two generalizations
 of this fact, obtaining also an application to horospheres in Hadamard manifolds.

To state our first result let us fix some notations. For an arbitrary subset $C$ of a Riemannian manifold $M$  and $r\ge 0$ we set
  $$\mathcal S(C,r)= \left\{x\in M \bigm| d(x,C)=r\right\},$$
where $d$ is
 the distance function.

 We will denote by $\Q$ the complete
 simply-connected $n$-dimensional manifold of constant curvature $c$.
 Let $W=W^j$ denote a complete connected $j$-dimensional totally geodesic submanifold of $\Q$.
 If $c\le 0$ there exists a natural projection $\pi_{_W}:\Q\to W$ satisfying
 $\pr(q)=\g(1)$, where $\g:[0,1]\to \Q$ is the unique geodesic with $\g(0)=q$, $\g(1)\in W$ and
 the lenght $L(\g)=d(q,W)$.
 Now we recall how this projection may be defined in the case $c>0$. We first set
 $$V_W=\mathcal S\Big(W,\fr{\pi}{2\sqrt{c}}\Big).$$ It is well known that $V_W$ is a totally geodesic sphere of dimension $n-j-1$. To construct
 a natural projection $\pr: (\Q-V_W)\to W$    we consider the normal bundle
 $$\nu(W)=\lf\{(x,v)\bigm|x\in W,\, v\in (T_xW)^\perp\rg\},$$
 where $(T_xW)^\perp$ denotes the orthogonal complement of $T_xW$ relatively to $T_x(\Q)$.
 Let $\exp^\perp:\nu(W)\to \Q$ denote the normal exponential map. Set
 $$B_W=\lf\{(x,v)\in\nu(W)\bigm||v|<\fr\pi{2\sqrt{c}}\rg\}.$$
 It is well known that the map $\exp^\perp|_{B_W}:B_W\to (\Q-V_W)$ is a diffeomorphism and that
 $\exp^\perp(\partial B_W)=V_W$, where $\partial B_W$ denotes the boundary of the closure $\bar B_W$.   Thus we may define the projection $\pr: (\Q-V_W)\to W$  by $\pr(\exp^\perp(p,v))=p$.
In other words, for $q\in \Q-V_W$ it holds that $\pr(q)=\g(1)$, where $\g:[0,1]\to \Q$ is the unique geodesic with $\g(0)=q$, $\g(1)\in W$ and
$L(\g)=d(q,W)$.

We denote by $G_W$ the group of isometries of $\Q$ that fix each point in $W$.
Given a tangent vector $v$ in some point in a Riemannian  manifold,
 we will denote by $\g_{v}$ a
 geodesic satisfying $\g_{v}'(0)=v$. The domain of $\g_v$ will be specified in each case.\\

Let $\Si\subset \Q$ be a connected embedded submanifold of the space
form $\Q$  of class $C^k$, with $k\ge 1$. Let $W$ be a complete connected
totally geodesic submanifold of $\Q$. We will consider the following properties:

\begin{enumerate}[(A)]
\item \label{invariant} For each point $q\in\Si$ there exists a neighborhood $U$ of $q$ in $\Si$
such that $U$ is contained in an embedded $C^k$ hypersurface of $\Q$ which is invariant
under the action of $G_W$.\\
\item \label{orthogonal} For any point $q\in \Si$, there exists a vector $\eta\in T_q(\Q)$ orthogonal to $\Si$
such that the geodesic $\g_\eta$ intersects $W$.\\
\item \label{vector} For any point $q\in \Si$ and any vector $v\in T_q\Si$ with $(d\pr)_qv=0$, there exists
a vector $\eta\in T_q(\Q)$ orthogonal to $v$
such that the geodesic $\g_\eta$ intersects $W$.
\end{enumerate}

\begin{theorem} \label{revolution}
Under the above notations assume that $\Si\cap W=\emptyset$ and the map $\pr|_\Si:\Si\to W$ is a submersion.
In the case $c>0$ we assume further that $\Si \cap V_W=\emptyset$. Then it holds that:
\begin{enumerate}[(i)]
\item If $c=0$ then properties {\rm \ref{invariant}}, {\rm \ref{orthogonal}} and {\rm \ref{vector}} are equivalent.
\item \label{inv-ort} If $c>0$ then {\rm \ref{invariant}} and {\rm \ref{orthogonal}} are equivalent;
\item\label{vector-invariant} If $c<0$ then {\rm \ref{invariant}} and {\rm \ref{vector}} are equivalent.
\end{enumerate}
\end{theorem}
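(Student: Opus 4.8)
The plan is to translate each of properties (A), (B), (C) into a linear-algebraic condition on the tangent space $T_q\Si$ at each point $q\in\Si$, and then observe that in the appropriate curvature sign the relevant properties all correspond to the single condition $(T_q\Si)^\perp\cap T_qW_q\ne 0$, where $W_q$ denotes the $(j+1)$-dimensional totally geodesic submanifold of $\Q$ containing $W$ and $q$. Fix $q\in\Si$, put $\rho=d(q,W)$ and $w=\pr(q)$, let $\nu_q\in T_q\Q$ be the unit vector at $q$ tangent to the minimizing geodesic from $w$ to $q$, and let $F_w=\pr^{-1}(w)$ be the totally geodesic $(n-j)$-submanifold through $q$ meeting $W$ orthogonally at $w$. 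There is an orthogonal decomposition $T_q\Q=H_q\oplus\real\nu_q\oplus O_q$ in which $O_q=(T_qW_q)^\perp$ is the tangent space of the $G_W$-orbit of $q$ (a geodesic sphere of dimension $n-j-1$ lying in $F_w$), $H_q=\nu_q^\perp\cap T_qW_q$ is the tangent space of the equidistant hypersurface $\m S(W,\rho)\cap W_q$, and $\ker(d\pr)_q=T_qF_w=\real\nu_q\oplus O_q$. Finally let $\Phi(q)=(\pr(q),d(q,W))$, a real-analytic map from $\Q-W$ (from $\Q-W-V_W$ if $c>0$) onto $W\times(0,R)$ with $R=+\infty$ (resp.\ $R=\fr{\pi}{2\sqrt c}$); it is a $G_W$-invariant submersion whose fibres are exactly the $G_W$-orbits, so $\Phi^{-1}(N)$ is a $G_W$-invariant hypersurface for every hypersurface $N$ of the base and $(\ker d\Phi_q)^\perp=T_qW_q$.

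First I would reduce (A). Since $\pr|_\Si$ is a submersion, $d\Phi|_\Si$ has rank $\ge j$ everywhere, with rank $j$ at $q$ exactly when $(T_q\Si)^\perp\cap T_qW_q\ne 0$. If a neighbourhood $U$ of $q$ in $\Si$ lies in a $G_W$-invariant $C^k$ hypersurface $M$, then $T_{q'}M\supseteq\ker d\Phi_{q'}$ for all $q'\in U$, which forces $\operatorname{rank}(d\Phi|_\Si)\equiv j$ on $U$; conversely, if $\operatorname{rank}(d\Phi|_\Si)\equiv j$ near $q$, the constant rank theorem realizes $\Phi(U)$ as a $C^k$ hypersurface $N$ of the base, and $U\subseteq\Phi^{-1}(N)$ is the desired $G_W$-invariant embedded $C^k$ hypersurface of $\Q$. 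Hence \emph{(A) holds iff $(T_q\Si)^\perp\cap T_qW_q\ne 0$ for all $q\in\Si$.} The submersion hypothesis has a second consequence that I will use: $(T_q\Si)^\perp\cap H_q=0$. Indeed $\pr$ restricts to a homothety of $\m S(W,\rho)\cap W_q$ onto $W$, so $d\pr$ restricts to a conformal isomorphism $H_q\to T_wW$; if $\eta\in H_q$ is orthogonal to $T_q\Si$ then, since $d\pr$ annihilates $\real\nu_q\oplus O_q$, one gets $\langle d\pr(\eta),d\pr(v)\rangle=\la^{2}\langle\eta,v\rangle=0$ for all $v\in T_q\Si$, so $d\pr(\eta)\perp d\pr(T_q\Si)=T_wW$ and $\eta=0$.

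Next I would describe the set $\m C_q=\{\,0\ne\eta\in T_q\Q:\g_\eta\text{ meets }W\,\}$. A geodesic through $q$ meeting $W$ passes through two points of the totally geodesic $W_q$, hence $\m C_q\subseteq T_qW_q$; inside $T_qW_q$ one has, classically, $\m C_q=T_qW_q\setminus\{0\}$ when $c>0$ (every geodesic of a round sphere meets a totally geodesic hypersphere), $\m C_q=\{\eta\in T_qW_q:\langle\eta,\nu_q\rangle\ne 0\}$ when $c=0$ (a line meets a parallel hyperplane iff it is not parallel to it), and, when $c<0$, $\m C_q$ is an open double cone about $\real\nu_q$ of half-angle $<\pi/2$, so that $\m C_q\subseteq\{\eta\in T_qW_q:\langle\eta,\nu_q\rangle\ne 0\}$ while $\nu_q\in\m C_q$. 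Property (B) at $q$ reads $(T_q\Si)^\perp\cap\m C_q\ne\emptyset$; for $c\ge 0$, the description of $\m C_q$ and the fact $(T_q\Si)^\perp\cap H_q=0$ make this equivalent to $(T_q\Si)^\perp\cap T_qW_q\ne 0$, hence (A)$\iff$(B) for $c>0$ and for $c=0$. For (C), the $v\in T_q\Si$ with $(d\pr)_qv=0$ are those in $T_q\Si\cap(\real\nu_q\oplus O_q)$; writing $v=t\nu_q+o$ with $o\in O_q$, the admissible $\eta$'s form $\m C_q\cap v^\perp$, which contains $\nu_q$ if $t=0$, and which, for $c\le 0$ and $t\ne 0$, equals $\m C_q\cap H_q=\emptyset$ (as $v^\perp\cap T_qW_q=H_q$ then). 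Thus, for $c\le 0$, (C) at $q$ amounts to $T_q\Si\cap\ker(d\pr)_q\perp\nu_q$; and this is equivalent to $(T_q\Si)^\perp\cap T_qW_q\ne 0$ --- pairing $\eta=\zeta+s\nu_q\in(T_q\Si)^\perp\cap T_qW_q$ (with $\zeta\in H_q$) against $v=t\nu_q+o\in T_q\Si\cap\ker d\pr$ gives $st=0$, with $s\ne 0$ (else $\zeta\in(T_q\Si)^\perp\cap H_q=0$), hence $t=0$; conversely $v\mapsto\langle v,\nu_q\rangle$ vanishes on $T_q\Si\cap\ker d\pr$, so it factors through $T_q\Si/(T_q\Si\cap\ker d\pr)\cong H_q$ and equals $\langle\zeta_0,\cdot\,\rangle$ for some $\zeta_0\in H_q$, making $\nu_q-\zeta_0$ a nonzero vector of $(T_q\Si)^\perp\cap T_qW_q$. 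Together with the reduction of (A) this gives (A)$\iff$(C) for $c\le 0$, completing the three cases. (The same computation shows that for $c>0$ property (C) holds automatically, which is why it is not listed.)

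The step I expect to be the main obstacle is the reduction of (A) above: one has to check with care that $\Phi^{-1}$ of the locally defined $C^k$ hypersurface of the base is a genuine embedded $C^k$ hypersurface of $\Q$ containing the neighbourhood $U$ --- which rests on applying the constant rank theorem to the merely $C^k$ map $\Phi|_\Si$ --- and one must establish, uniformly across the three model spaces, the elementary geometry of $\pr$, of the $G_W$-orbits and of the equidistant hypersurfaces, including the classical description of $\m C_q$ used in the hyperbolic case.
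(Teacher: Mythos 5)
Your argument is correct, and it takes a genuinely different route from the paper's. You reduce all three properties to the single pointwise condition $(T_q\Si)^\perp\cap T_qW_q\neq 0$, using the $G_W$-invariant submersion $\Phi=(\pr,d(\cdot,W))$ together with the constant rank theorem to manufacture the invariant hypersurface, the conformality of $d\pr$ on the horizontal space $H_q$ to get $(T_q\Si)^\perp\cap H_q=0$, and the classical description of the cone $\m C_q$ of directions whose geodesics meet $W$. The paper proceeds quite differently: to get {\rm(A)} from {\rm(B)} (for $c>0$) or {\rm(C)} (for $c\le 0$) it first proves synthetically, via geodesic triangles lying in totally geodesic surfaces and parallel transport (Lemmas \ref{triangle}, \ref{trivial}, \ref{pi2}), that every $v\in T_q\Si\cap\ker(d\pr)_q$ is orthogonal to the minimal geodesic to $W$; it then invokes Corollary \ref{cylinder2} (the Lipschitz distance-function argument underlying Theorem \ref{cylinder}) to show the fibres of $\pr|_\Si$ are equidistant from $W$, and finally builds the invariant hypersurface explicitly as the union $M=\bigcup_y S'(y,r(y))$, checking embeddedness by exhibiting an inverse to $\psi$ on the unit normal bundle of $\m V$; the converse implications use the decomposition $\real\g'(0)\oplus V\oplus T_qS'$ and a two-dimensional totally geodesic surface. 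Your approach is more uniform and makes the sharpness phenomena transparent (e.g.\ {\rm(A)}$\not\Rightarrow${\rm(B)} for $c<0$ is exactly the strict inclusion of $\m C_q$ in $T_qW_q\setminus H_q$), and it bypasses the paper's Claims \ref{sphere}--\ref{embedded} entirely; the price is that it leans on asserted-but-standard model-space facts, above all that $d\pr|_{H_q}$ is a nonzero multiple of parallel transport onto $T_wW$ (a Jacobi-field computation that is genuinely needed for $(T_q\Si)^\perp\cap H_q=0$, not just the fact that $\pr$ is a submersion as in Lemma \ref{piw}) and the description of $\m C_q$ in each curvature sign, and it produces the invariant hypersurface only abstractly rather than as the paper's explicit tube over $\m V$. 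Both routes are complete; yours would need those standard lemmas written out to be self-contained at the level of the paper.
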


\begin{remark}\label{imply} Since {\rm \ref{orthogonal}} implies {\rm \ref{vector}} trivially,
we obtain from Theorem \ref{revolution} that {\rm \ref{orthogonal}} implies {\rm \ref{invariant}} and {\rm \ref{invariant}} implies {\rm \ref{vector}} for all values of $c$.
\end{remark}
\begin{remark}\label{c} It is simple to show that  \ref{vector} is always true if $c>0$ (see 
Proposition \ref{true}).  
\end{remark}
\begin{remark}\label{sharp}
Theorem \ref{revolution} is sharp  in the sense that all the implications that do not appear in Theorem \ref{revolution} or in Remark \ref{imply} fail (see Section \ref{examples}). We will also see in Section \ref{examples} that the assumption
that $\pi_{_W}|_{\Si}$ is a submersion may not be dropped. We will also see in Proposition \ref{bsubmersion} that if $c\le 0$ and $\Si$ is a
hypersurface of $\Q$, then property {\rm \ref{vector}} implies that $\pi_{_W}|_\Sigma$ is a submersion.
\end{remark}

Example \ref{complex} presents a nontrivial situation in
$\real^4$ in which Theorem \ref{revolution} holds. In this example, if $p\in W-\{(0,0,0,0),(0,0,1,0)\}$ then the complete totally geodesic submanifold of maximal dimension which is orthogonal to $W$ at $p$ intersects $\Si$ in infinitely many isolated points.

\begin{question} If we remove from Theorem \ref{revolution} the assumption that $\pi_{_W}|_\Si$ is a submersion, 
is it true that \ref{orthogonal} implies \ref{invariant} in an open dense subset of 
$\Si$? 
\end{question}
In the next results we will relax the $C^1$-hypothesis and just consider a differentiable
immersion. Let $M$ be a Hadamard manifold. It is well known that $M$ admits a natural compactification $\bar M= M \cup M({\infty}),$ where the ideal boundary $M(\infty)$ consists of the asymptotic classes $\g(\infty)$ of geodesic rays $\g$ in $M$ (see \cite{e-on} or Chapter 3 of \cite{b-g-s}). We obtained
the following result (compare with condition \ref{vector} in Theorem \ref{revolution}).

\begin{theorem}\label{hadamard} Let $f:\Si\to M$ be a differentiable immersion of a connected manifold $\Si$ in a Hadamard manifold $M$. Fix $x_0\in M({\infty})$ and assume that for all point $p\in \Si$ and $v\in T_p\Si$ there exists a vector $\eta\in T_{f(p)}M$ orthogonal to $df_pv$ such that the geodesic ray $\ga_\eta:[0,+\infty)\to M$ satisfies that $\g_\eta(\infty)=x_0$.
Then $f(\Si)$ is contained in a horosphere of $M$ associated with $x_0$.
\end{theorem}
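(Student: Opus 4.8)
The plan is to reduce the statement to the elementary fact that a differentiable function with vanishing differential on a connected manifold is constant, the function in question being a Busemann function restricted to $\Si$. Fix a geodesic ray $\g$ in $M$ with $\g(\infty)=x_0$ and let $b=b_\g:M\to\real$ denote the associated Busemann function, $b(x)=\lim_{t\to+\infty}\bl d(x,\g(t))-t\br$. On a Hadamard manifold $b$ is a convex function of class $C^1$, its gradient at a point $x$ being $\na b(x)=-\si_x'(0)$, where $\si_x:[0,+\infty)\to M$ is the unique unit-speed geodesic ray with $\si_x(0)=x$ and $\si_x(\infty)=x_0$ (see \cite{e-on}, or Chapter 3 of \cite{b-g-s}); in particular $|\na b|\equiv 1$. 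By definition the horospheres of $M$ associated with $x_0$ are the level sets of $b$; replacing $\g$ by another ray in its asymptote class only shifts $b$ by an additive constant, so this family of horospheres is intrinsically attached to $x_0$.

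Next I would use the hypothesis to show that $\na b$ is everywhere normal to $f(\Si)$ along $f$. Fix $p\in\Si$ and $v\in T_p\Si$, and let $\eta\in T_{f(p)}M$ be orthogonal to $df_pv$ with $\g_\eta(\infty)=x_0$, as provided by the assumption; note $\eta\neq 0$ since $\g_\eta$ is a ray. Because in a Hadamard manifold there is exactly one geodesic ray issuing from $f(p)$ in a prescribed asymptote class, $\g_\eta$ is a constant-speed reparametrization of $\si_{f(p)}$, so $\eta=|\eta|\,\si_{f(p)}'(0)=-|\eta|\,\na b(f(p))$. Since orthogonality is unaffected by positive scaling, we conclude $\lan \na b(f(p)),\,df_pv\ran=0$. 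As $v\in T_p\Si$ was arbitrary, $df_p(T_p\Si)\subset\bl\na b(f(p))\br^\perp$ for every $p\in\Si$.

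Finally, consider $h:=b\circ f:\Si\to\real$. Since $b$ is $C^1$ and $f$ is differentiable, $h$ is differentiable with $dh_p v=\lan\na b(f(p)),\,df_pv\ran$, and by the previous step this vanishes for all $p$ and all $v\in T_p\Si$. A differentiable function with identically zero differential on a connected manifold is constant, hence $h\equiv\al$ for some $\al\in\real$; that is, $f(\Si)\subset b^{-1}(\al)$, a horosphere of $M$ associated with $x_0$, as claimed.

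The only non-formal ingredients are the two classical facts about Hadamard manifolds invoked above: the $C^1$-regularity of Busemann functions together with the identification of their gradient with the (reversed) direction of the asymptotic ray, and the uniqueness of the asymptotic ray through a given point. I do not anticipate a serious obstacle; the mild point to be careful about is that $f$ is assumed merely differentiable rather than $C^1$, which is harmless here since $b$ is $C^1$ and the chain rule for a differentiable map post-composed with a $C^1$ map still produces a differentiable composite with the expected differential, so the mean value theorem applies to $h$ along paths in $\Si$.
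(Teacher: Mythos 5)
Your proof is correct, but it takes a genuinely different route from the paper. You reduce the theorem to the classical regularity theory of Busemann functions on Hadamard manifolds: $b$ is $C^1$ with $\na b(x)=-\si_x'(0)$, the asymptotic ray from $f(p)$ to $x_0$ is unique so the hypothesized $\eta$ must be a positive multiple of $-\na b(f(p))$, and hence $d(b\circ f)\equiv 0$ on the connected manifold $\Si$. The paper instead deduces the theorem from its Theorem \ref{cylinder}, applied to $G=h_\al$ with Lipschitz constant $C=1$: the only inputs are that $h_\al$ is $1$-Lipschitz and that $h_\al(\ga_\eta(0))-h_\al(\ga_\eta(1))=L(\ga_\eta|_{[0,1]})$ along the asymptotic ray, and the vanishing of the derivative of $h_\al\circ f$ along curves is then obtained almost everywhere by a first-variation-of-length argument for Lipschitz functions. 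The trade-off is clear: your argument is shorter and more transparent, but it leans on the $C^1$ smoothness of Busemann functions and the gradient identification, which are nontrivial theorems about Hadamard manifolds (they are classical and correctly cited, so there is no gap); the paper's argument is more self-contained, never needs any differentiability of $G$, and is strictly more general --- the same Theorem \ref{cylinder} also yields Corollary \ref{cylinder2} on distance functions $d(\cdot,\m A)$, which are Lipschitz but typically not $C^1$, so your method would not cover that case. Your closing remark about composing a merely differentiable $f$ with the $C^1$ function $b$ is handled correctly: the pointwise chain rule needs only differentiability at the relevant points, and the mean value theorem along paths then gives constancy.
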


The above result can be proved by using the following general result.

\begin{theorem}\label{cylinder} Let $f:\Si\to M$ be a differentiable immersion of a connected manifold $\Si$ in a Riemannian manifold $M$. Let $G:M\to \real$ be a Lipschitz function with Lipschitz constant $C>0$.
Assume that for all $p\in \Si$  and any $v\in T_p\Si$ there exists a nontrivial vector $\eta\in T_{f(p)}M$ orthogonal to $df_pv$ such that the geodesic $\ga_\eta:[0,1]\to M$ satisfies that $$|G(f(p))-G(\ga_\eta(1))|=C\, L(\ga_\eta).$$
Then $f(\Si)$ is contained in a level set of $G$.
\end{theorem}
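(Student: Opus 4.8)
The plan is to show that $G\circ f$ is constant on $\Si$; since $\Si$ is a connected manifold it suffices to show that $G\circ f$ is constant along an arbitrary smooth regular curve $\si\colon[0,1]\to\Si$. Fix such a $\si$ and set $c=f\circ\si\colon[0,1]\to M$ and $h=G\circ c\colon[0,1]\to\real$. Being the composition of the Lipschitz function $G$ with the (locally Lipschitz) curve $c$, the function $h$ is Lipschitz, hence absolutely continuous and differentiable almost everywhere; so it is enough to prove that $h'(t_0)=0$ at every point $t_0\in(0,1)$ where $h$ is differentiable.

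First I would extract what the hypothesis gives at a single point. Fix $p\in\Si$, $v\in T_p\Si$, and let $\eta\in T_{f(p)}M$, $\eta\neq 0$, $\eta\perp df_pv$, be as in the statement; write $\g=\g_\eta\colon[0,1]\to M$ and $\ell=L(\g)=|\eta|$. From $|G(\g(0))-G(\g(1))|=C\ell$, the Lipschitz inequality $|G(x)-G(y)|\le C\,d(x,y)$, and the triangle inequality applied to $\g(0),\g(t),\g(1)$, one is forced into the equalities $|G(\g(0))-G(\g(t))|=C\ell t$ and $d(\g(0),\g(t))=\ell t$ for every $t\in[0,1]$; in particular $\g|_{[0,t]}$ is a minimizing geodesic and, keeping track of signs, $t\mapsto G(\g(t))=G(\g(0))+\ep C\ell t$ for some fixed $\ep\in\{-1,+1\}$. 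Reparametrizing $\g|_{[0,1/2^{k}]}$ shows that $\eta/2^{k}$ enjoys the same properties, so \emph{we may assume that $\ell=|\eta|$ is as small as we wish}; choosing it small enough that $\g|_{[0,1]}$ is the \emph{unique} minimizing geodesic from $\g(0)$ to $\g(1)$, the function $r:=d(\,\cdot\,,\g(1))$ is differentiable at $\g(0)=f(p)$ with $\na r(f(p))=-\eta/|\eta|$, a vector orthogonal to $df_pv$.

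Now let $t_0\in(0,1)$ be a point of differentiability of $h$, put $p=\si(t_0)$, $q=c(t_0)=f(p)$, $w=c'(t_0)=df_p\si'(t_0)$, and apply the hypothesis at $p$ with $v=\si'(t_0)$ to get $\eta,\g,\ell,\ep,r$ as above, so $\na r(q)$ is orthogonal to $w$. Then $r\circ c$ is differentiable at $t_0$ with $(r\circ c)'(t_0)=\lan\na r(q),w\ran=0$, whence $d(c(t_0+s),\g(1))=r(c(t_0+s))=\ell+o(s)$ as $s\to0$. Using the Lipschitz bound for $G$ together with $G(\g(1))=h(t_0)+\ep C\ell$ we obtain
$$\bigl|\,h(t_0+s)-h(t_0)-\ep C\ell\,\bigr|\ \le\ C\,d(c(t_0+s),\g(1))\ =\ C\ell+o(s).$$
If $\ep=+1$ this yields $h(t_0+s)-h(t_0)\ge -o(s)$, and if $\ep=-1$ it yields $h(t_0+s)-h(t_0)\le o(s)$; in either case, inserting $h(t_0+s)-h(t_0)=h'(t_0)\,s+o(s)$ and letting $s\to 0^{+}$ and $s\to 0^{-}$ forces $h'(t_0)=0$. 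Hence $h'=0$ almost everywhere, so $h$ is constant on $[0,1]$ and $G\circ f$ takes the same value at $\si(0)$ and $\si(1)$. Since any two points of the connected manifold $\Si$ are joined by a finite chain of smooth regular curves, $G\circ f$ is constant, i.e.\ $f(\Si)$ is contained in a single level set of $G$.

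I expect the crux to be the local estimate $d(c(t_0+s),\g(1))=\ell+o(s)$. It is here that the orthogonality $\eta\perp df_pv$ built into the hypothesis is actually used, and it also requires knowing that $\g|_{[0,1]}$ is a minimizing geodesic (so that $r=d(\,\cdot\,,\g(1))$ is differentiable at $q$ with gradient $-\eta/|\eta|$); that minimality is not automatic, but is exactly what the maximal-slope equality $|G(\g(0))-G(\g(1))|=C\,L(\g)$ delivers once one also performs the harmless reduction to $|\eta|$ small.
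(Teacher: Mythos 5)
Your proof is correct, and its overall skeleton coincides with the paper's: restrict to a curve, note that $G$ composed with the curve is Lipschitz, hence differentiable a.e.\ and recoverable from its derivative, and kill the derivative at each point of differentiability by exploiting the orthogonality of the special geodesic. Where you genuinely diverge is in the implementation of that last step. The paper never shows that $\ga_\eta$ is minimizing; it instead builds a family of broken competitor paths $h_s$ running from $\al(s)$ to a nearby point $\ga(t_0)$ inside a strongly convex ball and then along $\ga$ to $\ga(1)$, and compares $C\,L(h_s)$ with $\rho(s)-G(\ga(1))$, using the first variation of arc length (i.e.\ the smoothness of $x\mapsto d(\ga(t_0),x)$ on the convex ball) to get $\frac{d}{ds}L(h_s)|_{s=s_0}=0$. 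You instead observe that the hypothesis is an equality case of the Lipschitz bound, which forces $\ga$ to be minimizing and $G$ to be affine of slope $\pm C|\eta|$ along it, and then rescale $\eta$ so as to differentiate $d(\cdot,\ga(1))$ directly at $f(p)$. Your route isolates a cleaner rigidity statement and avoids the broken-path construction; the paper's device achieves the same localization without ever needing $\ga$ to be minimizing. One point to tighten: uniqueness of the minimizing geodesic between the two endpoints is not by itself what makes $r=d(\cdot,\ga(1))$ differentiable at $f(p)$ with gradient $-\eta/|\eta|$; you should take $|\eta|$ below the convexity (totally normal neighbourhood) radius at $f(p)$, so that $r$ is smooth on a whole neighbourhood of $f(p)$ --- with your rescaling step already in place this costs nothing. (Both your argument and the paper's tacitly use that the curve in $M$ is Lipschitz, which is arranged by parameterizing by arc length as the paper does.)
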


Given an arbitrary subset $\m A$ of a manifold $M$ the distance function from $\m A$ is Lipschitz with Lipschitz constant $1$ and vanishes on $\m A$. Thus we may apply Theorem \ref{cylinder} to obtain the following result.

\begin{corollary}\label{cylinder2} Let $f:\Si\to M$ be a differentiable immersion of a connected manifold $\Si$ in a Riemannian manifold $M$. Let $\m A\subset M$ be an arbitrary subset. Assume that for all $p\in \Si$  and $v\in T_p\Si$ there exists a vector $\eta\in T_{f(p)}M$ orthogonal to $df_pv$ such that the geodesic $\ga_\eta:[0,1]\to M$ satisfies that $\g_\eta(1)\in \m A$ and $$L\lf(\ga_\eta\rg)=d(f(p),\m A).$$
Then $f(\Si)$ is contained in $\mathcal S(\m A,r)$ for some constant $r\ge 0$.
\end{corollary}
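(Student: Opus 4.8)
The plan is to invoke Theorem \ref{cylinder} with $G\colon M\to\real$ the function $G(x)=d(x,\m A)$, which by the remark just above is $1$-Lipschitz, so that the relevant constant is $C=1$; the target level set will then be $G^{-1}(r)=\mathcal S(\m A,r)$.

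First I would treat the case where $d(f(p),\m A)>0$ for every $p\in\Si$. Here, given $p\in\Si$ and $v\in T_p\Si$, the hypothesis of the corollary supplies $\eta\in T_{f(p)}M$ with $\eta\perp df_pv$, $\g_\eta(1)\in\m A$ and $L(\g_\eta)=d(f(p),\m A)$; the assumption $d(f(p),\m A)>0$ makes $L(\g_\eta)>0$, so $\eta$ is nontrivial, and since $G(\g_\eta(1))=0$ and $G(f(p))=L(\g_\eta)$ we get precisely the identity $|G(f(p))-G(\g_\eta(1))|=C\,L(\g_\eta)$. Theorem \ref{cylinder} then places $f(\Si)$ in a level set $G^{-1}(r)$; as $G\ge 0$ we have $r\ge 0$ (indeed $r>0$ in this case), and $G^{-1}(r)=\mathcal S(\m A,r)$ by definition, which is the assertion.

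The hard part, I expect, is the remaining case, in which $d(f(p_0),\m A)=0$ for some $p_0$: there the hypothesis at $p_0$ forces $L(\g_\eta)=0$, hence $\eta=0$, so Theorem \ref{cylinder} --- which demands a \emph{nontrivial} $\eta$ --- cannot be applied verbatim at such points, and a separate argument is needed. My plan is to show that a single such $p_0$ already forces $d(f(\cdot),\m A)\equiv 0$ on $\Si$, so that the corollary holds with $r=0$; note first that $f(p_0)=\g_\eta(1)\in\m A\subset\mathcal S(\m A,0)$. Let $\Om=\{p\in\Si\mid d(f(p),\m A)>0\}$, which is open and, since $p_0\notin\Om$, a proper subset of the connected manifold $\Si$. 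If $\Om\neq\emptyset$, choose a connected component $\Om'$ of $\Om$; it is open, and it is not closed in $\Si$ (otherwise $\Om'=\Si$ by connectedness, contradicting $\Om'\subseteq\Om\subsetneq\Si$), so there is a point $p^{\ast}\in\overline{\Om'}\setminus\Om'$. Since a component of an open set in a manifold is clopen in that open set, one has $\overline{\Om'}\cap\Om=\Om'$, whence $p^{\ast}\in\Si\setminus\Om$, i.e.\ $d(f(p^{\ast}),\m A)=0$. Applying the first case to the restriction $f|_{\Om'}$, which inherits the hypothesis and on which $d(f(\cdot),\m A)$ is everywhere positive, yields a constant $r'>0$ with $d(f(p),\m A)=r'$ for all $p\in\Om'$; taking $p_n\in\Om'$ with $p_n\to p^{\ast}$ and using that $p\mapsto d(f(p),\m A)$ is continuous forces $r'=0$, a contradiction. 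Hence $\Om=\emptyset$, and $f(\Si)\subset\mathcal S(\m A,0)$, completing the proof.
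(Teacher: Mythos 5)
Your proof is correct and follows the paper's own route: the paper derives the corollary in one line by applying Theorem \ref{cylinder} to $G=d(\cdot,\m A)$ with $C=1$, exactly as in your main case. The only difference is that you also treat the degenerate case $d(f(p),\m A)=0$, where the hypothesis forces $\eta=0$ so that Theorem \ref{cylinder} (which demands a \emph{nontrivial} $\eta$) cannot be invoked verbatim; the paper silently skips this point, and your connectedness argument closes that small gap correctly.
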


\section{\bf Isometry actions and submanifolds}
The purpose of this section is to prove Theorem \ref{revolution}, which will be done after
stating some lemmas. The first one is a well known simple result about the geometry of manifolds
with constant sectional curvature.
\begin{lemma} \label{triangle} Let $(\g_1,\g_2,\g_3)$ be a geodesic
triangle where each $\g_i:[a_i,b_i]\to \Q$ is a minimal geodesic. Then there
exists a totally geodesic surface $N^2\subset \Q$ which is
isometric to $\Bbb Q_c^2$ and contains the images of $\g_1, \g_2$ and $\g_3$.
\end{lemma}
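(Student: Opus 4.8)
The plan is to reduce to the classical model spaces $\Bbb R^n$, $\Sp^n$, $\hy^n$ and argue there. Denote by $\g_i:[a_i,b_i]\to\Q$ the three minimal edges, and let $p_1,p_2,p_3$ be the three vertices, so that (relabeling if necessary) $\g_1$ joins $p_1$ to $p_2$, $\g_2$ joins $p_2$ to $p_3$, and $\g_3$ joins $p_3$ to $p_1$. First I would dispose of the degenerate cases: if the three vertices are collinear, i.e.\ all lie on one geodesic $\ell$, then any totally geodesic $\Bbb Q_c^2$ containing $\ell$ works (such exists since $\Q$ has constant curvature and $n\ge 2$; if $n=1$ the statement is vacuous or trivial). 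So assume $p_1,p_2,p_3$ are not collinear.

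Next I would produce the candidate surface. Pick the vertex $p_1$ and the two unit tangent vectors $u=\g_3'(b_3)$ (pointing from $p_1$ toward $p_3$, up to orientation of $\g_3$) and $w=\g_1'(a_1)$ (pointing from $p_1$ toward $p_2$). Since the vertices are not collinear, $u$ and $w$ are linearly independent, so they span a $2$-plane $\Pi\subset T_{p_1}\Q$. Let $N^2:=\exp_{p_1}(\Pi\cap \mathcal D)$ where $\mathcal D$ is the appropriate domain — all of $\Pi$ when $c\le 0$, and the open disk of radius $\pi/\sqrt c$ when $c>0$. Standard constant-curvature geometry (Cartan's theorem on totally geodesic submanifolds of space forms) gives that $N^2$ is a totally geodesic submanifold isometric to $\Bbb Q_c^2$. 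By construction $N^2$ contains the geodesics $\g_1$ and $\g_3$ entirely: each starts at $p_1$ with initial velocity in $\Pi$, hence stays in $\exp_{p_1}(\Pi)$, and each is minimal so its length is below the relevant cut distance, keeping it inside the domain $\mathcal D$.

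The remaining point — and the main obstacle — is to show that the third edge $\g_2$, joining $p_2$ to $p_3$, also lies in $N^2$. The two endpoints $p_2=\g_1(b_1)$ and $p_3=\g_3(a_3)$ are both in $N^2$. Since $N^2$ is totally geodesic, it contains \emph{some} geodesic segment between $p_2$ and $p_3$; one must match it with $\g_2$. When $c\le 0$ this is immediate: geodesics between two points are unique, so the minimal geodesic $\g_2$ coincides with the one inside $N^2$. When $c>0$ uniqueness can fail only if $p_2$ and $p_3$ are antipodal, but then $d(p_2,p_3)=\pi/\sqrt c$ is the diameter, forcing each of $\g_1,\g_3$ to have length $\ge \pi/\sqrt c$ as well (by the triangle inequality, $d(p_2,p_3)\le d(p_2,p_1)+d(p_1,p_3)$ would be violated unless... ) — more carefully, if $p_2,p_3$ are antipodal then $p_1$ lies on \emph{every} minimal geodesic from $p_2$ to $p_3$, hence on a minimal geodesic through $p_2$ and $p_3$, and one checks the broken geodesic $\g_1\cup\g_3$ is then itself a minimal geodesic from $p_2$ to $p_3$, so its image lies on a single geodesic $\ell\subset N^2$; taking the minimal geodesic $\g_2$ inside $N^2$ from $p_2$ to $p_3$ completes the argument in this case as well. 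In all cases $\g_2([a_2,b_2])\subset N^2$, which finishes the proof. \cqd
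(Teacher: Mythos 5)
The paper states Lemma \ref{triangle} as a well-known fact and gives no proof, so there is nothing to compare against; your argument is the standard one (exponentiate the $2$-plane spanned at a vertex by the two adjacent edge directions, then use uniqueness of minimal geodesics to capture the opposite edge), and it is correct as written for $c\le 0$. For $c>0$, however, two points need repair. First, your definition $N^2=\exp_{p_1}(\Pi\cap\mathcal D)$ with $\mathcal D$ the \emph{open} disk of radius $\pi/\sqrt c$ does not produce a surface isometric to $\Bbb Q_c^2$: it is the great $2$-sphere minus the antipode of $p_1$, which is not complete. Worse, the opposite edge $\g_2$ can genuinely pass through the antipode of $p_1$ (take $p_1$ the north pole and $p_2,p_3$ near the south pole on either side of it), so the containment $\g_2([a_2,b_2])\subset N^2$ actually fails for this $N^2$. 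The fix is simply to take $N^2=\exp_{p_1}(\Pi)$, the full great $2$-sphere.

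Second, your treatment of the antipodal case rests on a false assertion: if $p_2$ and $p_3$ are antipodal, $p_1$ does not lie on \emph{every} minimal geodesic from $p_2$ to $p_3$ --- these half great circles foliate the sphere minus the two antipodes, and $p_1$ lies on exactly one of them. The correct observation is that antipodality of $p_2,p_3$ forces $d(p_2,p_1)+d(p_1,p_3)=\pi/\sqrt c=d(p_2,p_3)$ (since $d(x,-y)=\pi/\sqrt c-d(x,y)$ on the sphere), so the concatenation of $\g_1$ and $\g_3$ is a minimizer, hence an unbroken geodesic, hence the three vertices are collinear --- a case you have already excluded. So in the nondegenerate case no two vertices are antipodal, the minimal geodesic $\g_2$ is unique, and it must coincide with the minimal geodesic joining $p_2$ to $p_3$ inside the full great $2$-sphere $N^2$. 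With these two corrections the proof is complete.
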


Let $W\subset \Q$ be a complete totally geodesic connected submanifold and fix $p\in W$.
It is well known that there exists a unique complete totally geodesic connected submanifold
$S=S_{p{W}}\subset\Q$ containing $p$ such that the tangent space $T_pS$ agrees
with the orthogonal complement $(T_pW)^\perp$. The following result is well known and
follows easily from the equality $\pi_{_W}(\exp^\perp(x,\om))=x$, where $(x,\om)\in \nu(W)$ for
 all $c\in\real$, and satisfies $|\om|<\frac{\pi}{2\sqrt{c}}$ if $c>0$.

\begin{lemma} \label{piw} With the notations above, the map $\pr$ is a submersion on its domain.
Furthermore for $p\in W$ it holds that $\pr^{-1}(\{p\})=S_{p{W}}$ in the case $c\le 0$
and $\pr^{-1}(\{p\})=S_{p{W}}\cap(\Q-V_W)$ in the case $c>0$. In particular for $q\in \pr^{-1}(\{p\})$ the kernel ${\rm{Ker}}\lf((d\pi_{_W})_q\rg)=T_q(S_{pW})$.
\end{lemma}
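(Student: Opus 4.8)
The plan is to realize $\pr$, via the normal exponential map, as the base-point projection of the normal bundle of $W$, and then to read off the submersion property and the fibres from standard properties of that projection.

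First I would name the diffeomorphism supplied by the facts recalled above: let $\Phi$ be $\exp^\perp$ on all of $\nu(W)$ when $c\le 0$, and $\exp^\perp|_{B_W}$ when $c>0$, so that $\Phi$ is a diffeomorphism from its domain onto $\Q$ when $c\le 0$ and onto $\Q-V_W$ when $c>0$. Let $P\colon\nu(W)\to W$, $P(x,v)=x$, be the vector-bundle projection; it is a submersion, as is its restriction to the open set $B_W$. The equality $\pr(\exp^\perp(x,v))=x$ recalled just before the lemma says exactly that $\pr=P\circ\Phi^{-1}$ on the domain of $\pr$. Being the composition of a diffeomorphism with a submersion, $\pr$ is a submersion, which is the first assertion.

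Next I would compute the fibre over a point $p\in W$. From $\pr=P\circ\Phi^{-1}$ the fibre $\pr^{-1}(\{p\})$ is the $\Phi$-image of the part of $P^{-1}(\{p\})$ lying in the domain of $\Phi$, and $P^{-1}(\{p\})$ is the normal space $\nu_p(W)=(T_pW)^\perp\subset T_p\Q$. On $\nu_p(W)$ the map $\Phi$ is simply $\exp_p$ restricted to $(T_pW)^\perp$, restricted further to the open ball of radius $\fr{\pi}{2\sqrt c}$ in $(T_pW)^\perp$ when $c>0$. Since $S_{p{W}}$ is totally geodesic with $T_pS_{p{W}}=(T_pW)^\perp$, every geodesic of $\Q$ leaving $p$ with initial velocity in $(T_pW)^\perp$ stays in $S_{p{W}}$; hence $\exp_p$ maps $(T_pW)^\perp$ into $S_{p{W}}$ and there agrees with the exponential map of $S_{p{W}}$ at $p$. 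When $c\le 0$, $\exp_p$ is a global diffeomorphism of $T_p\Q$ onto $\Q$, hence restricts to a diffeomorphism of $(T_pW)^\perp=T_pS_{p{W}}$ onto $\exp_p\lf(T_pS_{p{W}}\rg)$, and by completeness of the totally geodesic $S_{p{W}}$ (Hopf--Rinow) this image is all of $S_{p{W}}$; thus $\pr^{-1}(\{p\})=S_{p{W}}$. When $c>0$, $\pr^{-1}(\{p\})$ is the $\exp_p$-image of the ball of radius $\fr{\pi}{2\sqrt c}$ in $(T_pW)^\perp$, an open subset of $S_{p{W}}$ contained in $\Phi(B_W)=\Q-V_W$; using the description of $V_W$ and of $\exp^\perp|_{B_W}$ one identifies it with $S_{p{W}}\cap(\Q-V_W)$.

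In all cases $\pr^{-1}(\{p\})$ is an open subset of the totally geodesic submanifold $S_{p{W}}$, so for $q$ in this fibre $\mathrm{Ker}\lf((d\pr)_q\rg)=T_q\lf(\pr^{-1}(\{p\})\rg)=T_q(S_{p{W}})$, which is the last assertion. I expect the one genuinely non-formal step to be the identification in the case $c>0$: that is where the specific facts recalled before the lemma — that $V_W=\mathcal S\lf(W,\fr{\pi}{2\sqrt c}\rg)$ and that $\exp^\perp|_{B_W}$ is a diffeomorphism onto $\Q-V_W$ — must be used; everything else rests only on the naturality of $\exp^\perp$ over $W$ and on the invariance of a totally geodesic submanifold under its own geodesic flow.
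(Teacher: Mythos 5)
Your overall strategy is exactly the one the paper has in mind: the paper gives no proof of Lemma \ref{piw}, only the remark that it ``follows easily from the equality $\pi_{_W}(\exp^\perp(x,\om))=x$'', and your factorization $\pr=P\circ\Phi^{-1}$ is precisely the fleshed-out version of that remark. The submersion claim, the identification of the fibre for $c\le 0$, and the kernel statement are all handled correctly, and these are the only parts of the lemma the paper actually uses later (Lemma \ref{pi2}, Claim \ref{sphere}, Claim \ref{glinha}, Proposition \ref{bsubmersion} all only need that $\pr^{-1}(\{p\})$ is an open subset of $S_{pW}$ through $q$ and that ${\rm Ker}((d\pi_{_W})_q)=T_q(S_{pW})$).

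The one step you defer --- ``using the description of $V_W$ \dots one identifies it with $S_{pW}\cap(\Q-V_W)$'' --- is, however, not completable, because that identification is false as stated. Your own computation shows that for $c>0$ the fibre is $\exp_p$ of the \emph{open ball} of radius $\fr{\pi}{2\sqrt c}$ in $(T_pW)^\perp$, i.e.\ the open metric ball of radius $\fr{\pi}{2\sqrt c}$ about $p$ inside the great $(n-j)$-sphere $S_{pW}$. But $V_W$ is exactly the equator of $S_{pW}$ relative to the pole $p$, so $S_{pW}\cap(\Q-V_W)$ is the union of \emph{two} open hemispheres: the one centered at $p$ and the one centered at the antipode $-p$, which lies in $W$ (as $W$ is a great sphere of dimension $j\ge 1$). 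Points of the far hemisphere are sent by $\pr$ to $-p$, not to $p$; concretely, for $\Q=S^2$, $W$ the equator and $p=(1,0,0)$, the set $S_{pW}\cap(\Q-V_W)$ is the full meridian minus the poles, while $\pr^{-1}(\{p\})$ is only its half with $x>0$. So the correct statement is $\pr^{-1}(\{p\})=\{q\in S_{pW}\mid d(q,p)<\fr{\pi}{2\sqrt c}\}$; this is a slip in the paper's formulation rather than in your method, but as written your proposal leaves as ``routine'' the one equality that cannot be verified. You should either record the corrected description of the fibre or note explicitly that only its being an open neighborhood of $q$ in $S_{pW}$ is needed for the kernel identity.
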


The next lemma is a simple consequence of Lemma \ref{triangle}.
\begin{lemma} \label{trivial} For $c>0$, let $\g:\real\to
\Q$ be a normal geodesic. Let  $\al:[0,t_0]\to\Q$ and $\be:[0,s_0]\to\Q$ be minimal normal geodesics
satisfying that:
\begin{enumerate}[(i)]
\item $\al(0)=\g(a)$ and $\be(0)=\g(b)$ with $0\le a\le b\le\fr\pi{\sqrt{c}}$;
\item \label{orthogonallity} $\lf<\al'(0),\g'(a)\rg>=\lf<\be'(0),\g'(b)\rg>=0$;
\item $\al(t_0)=\be(s_0)$.
\end{enumerate}
Then it holds that $\be'(0)$ is the parallel transport of $\al'(0)$ along $\g$.
\end{lemma}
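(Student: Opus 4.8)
The plan is to push the configuration, by means of Lemma \ref{triangle}, into a totally geodesic round $2$-sphere and to settle the claim there by a direct argument with great circles. Set $q:=\al(t_0)=\be(s_0)$ and first suppose $a<b$; the degenerate case $a=b$ is commented on at the end. Since $b-a\le\fr\pi{\sqrt c}$ does not exceed the diameter of $\Q$, the segment $\g|_{[a,b]}$ is a minimal geodesic, and together with $\be$ and with $s\mapsto\al(t_0-s)$ it closes up into a geodesic triangle with vertices $\g(a),\g(b),q$. By Lemma \ref{triangle} the images of these three segments lie in a totally geodesic surface $N^2\subset\Q$ isometric to $\Bbb Q_c^2$; as $c>0$, $N^2$ is a round sphere of diameter $\fr\pi{\sqrt c}$. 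Because $N^2$ is complete and totally geodesic and the segments $\g|_{[a,b]}$, $\al$, $\be$ are nonconstant, the complete geodesics $\g$, $\al$, $\be$ all lie in $N^2$; moreover $\al'(0),\be'(0)\in TN^2$, and parallel transport of $TN^2$-vectors along $\g$ computed in $\Q$ agrees with that computed inside $N^2$. Hence it suffices to prove the statement with $\Q$ replaced by $N^2$.

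Work inside $N^2$ and let $C$ be the great circle carrying $\g$. Parallel transport along $C$ preserves lengths and the inner product with $\g'$, so the transport of the unit vector $\al'(0)$, which is normal to $C$ at $\g(a)$, is a unit vector normal to $C$ at $\g(b)$; hence it equals $\be'(0)$ or $-\be'(0)$, and the real point of the proof is to discard the minus sign. The circle $C$ splits $N^2$ into two open hemispheres $H^+$ and $H^-$, a unit normal to $C$ points into exactly one of them, and parallel transport along $C$ respects this alternative: if $n(t)$ denotes the unit normal to $C$ at $\g(t)$ pointing into $H^+$, then $\na_{\g'}n=0$ (its covariant derivative is orthogonal to $n$, since $|n|=1$, and to $\g'$, since $\lf<n,\g'\rg>=0$ and $\na_{\g'}\g'=0$, while $\{n,\g'\}$ frames $TN^2$ along $C$), so $n$, and with it $-n$, is parallel along $C$.

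It remains to check that $\al'(0)$ and $\be'(0)$ point into the same hemisphere. The complete geodesic $\al$ is a great circle of $N^2$ different from $C$ (they cross orthogonally at $\g(a)$), so it meets $C$ in exactly two antipodal points, necessarily $\g(a)$ and its antipode $\g\lf(a+\fr\pi{\sqrt c}\rg)$, the latter reached at parameter $\fr\pi{\sqrt c}$. Thus $\al(t)$ lies in the open hemisphere $H^\ep$ into which $\al'(0)$ points for every $t\in\lf(0,\fr\pi{\sqrt c}\rg)$, and since $\al$ is minimal we have $t_0\le\fr\pi{\sqrt c}$, whence $q=\al(t_0)\in\overline{H^\ep}$; likewise $q\in\overline{H^{\ep'}}$, where $\be'(0)$ points into $H^{\ep'}$. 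If $\ep\ne\ep'$ then $q\in\overline{H^+}\cap\overline{H^-}=C$, which can occur only if $t_0=\fr\pi{\sqrt c}$, $q=\g\lf(a+\fr\pi{\sqrt c}\rg)$ and, by symmetry, $q=\g\lf(b+\fr\pi{\sqrt c}\rg)$; then $\g(a)=\g(b)$, i.e. $a\equiv b$ modulo $\fr{2\pi}{\sqrt c}$, contradicting $0<b-a\le\fr\pi{\sqrt c}$. Therefore $\ep=\ep'$, and by the second paragraph the parallel transport of $\al'(0)$ along $\g$ is exactly $\be'(0)$. The chief obstacle of the argument is precisely this side (equivalently, sign) determination; the degenerate case $a=b$, in which the triangle collapses, has to be treated on its own, where after the same passage to $N^2$ one must compare the initial velocities of two minimizing geodesics issuing from $\g(a)$ and both ending at $q$.
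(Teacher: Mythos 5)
Your opening move --- pushing the whole configuration into a totally geodesic surface $N^2$ isometric to $\Bbb Q^2_c$ via Lemma \ref{triangle} --- is exactly the paper's; the difference is in what comes next. The paper disposes of the spherical picture in one sentence, asserting without argument that either $b-a=\frac\pi{\sqrt c}$ and the images of $\al$ and $\be$ unite into a geodesic arc, or $b-a<\frac\pi{\sqrt c}$ and $t_0=s_0=\frac\pi{2\sqrt c}$. You instead prove the sign statement directly: the unit normal field along the great circle $C$ pointing into a fixed open hemisphere is parallel, and $\al'(0)$, $\be'(0)$ point into the same hemisphere because their common endpoint $q$ lies in the closure of both. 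For $a<b$ this is correct and fills in precisely the side determination that the paper's ``it is not difficult to conclude'' hides. One small point: $q\in C$ can also occur with $t_0=0$ or $s_0=0$, not only with $t_0=\frac\pi{\sqrt c}$; you are tacitly assuming $t_0,s_0>0$, which is the only sensible reading of ``minimal normal geodesic'' but should be said.

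The gap is the case $a=b$, which you flag but leave open. If $q$ is not antipodal to $\g(a)=\g(b)$, uniqueness of the minimizing geodesic between non-antipodal points of $\Q$ gives $\al=\be$, hence $\al'(0)=\be'(0)$, which is what is required since the parallel transport is the identity. But if $q$ is the antipode of $\g(a)$, the statement is false for $n\ge 3$: in $S^3$ take $\g(t)=(\cos t,\sin t,0,0)$, $a=b=0$, and let $\al$, $\be$ be the minimizing half great circles from $(1,0,0,0)$ to $(-1,0,0,0)$ with $\al'(0)=(0,0,1,0)$ and $\be'(0)=(0,0,0,1)$; every hypothesis holds and the conclusion fails. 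So this subcase cannot be closed by any argument --- it is a defect of the lemma as stated, which the paper's own proof also overlooks (its dichotomy would force $t_0=s_0=\frac\pi{2\sqrt c}$ here, which is wrong). The defect is harmless for the paper, because the only application, in Lemma \ref{pi2}, has $a=0<b=r_0<\frac\pi{2\sqrt c}$ with nondegenerate $\al$ and $\be$, i.e.\ it lands squarely in the case you did prove; but your write-up should either restrict to $a<b$ explicitly or record the antipodal counterexample, rather than promise a separate treatment that cannot succeed.
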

\begin{proof} By Lemma \ref{triangle} there exists a
totally geodesic surface $N^2\subset \Q$ which is
isometric to $\Bbb Q_c^2$ containing the images of $\g, \al$ and $\be$.
 It is not difficult to conclude that either $b-a=\fr\pi{\sqrt{c}}$ and the union of the images of
 $\al$ and $\be$ determines a geodesic arc of length $\fr\pi{\sqrt{c}}$, or $b-a<\fr\pi{\sqrt{c}}$
 and $s_0=t_0=\fr\pi{2\sqrt{c}}$. In both cases the conclusion of Lemma \ref{trivial} holds.
 \end{proof}

\begin{lemma} \label{pi2} Let $\Si\subset \Q$ be a differentiable embedded connected submanifold
with $c>0$. Let $W$ be a closed connected totally geodesic submanifold
of $\Q$ and fix a point $q\in \Si\cap (\Q-\{W\cup V_W\})$. Assume
that the map $\pr|_\Si:\Si\to W$ is a submersion at $q$ and that
there exists a vector $\eta\in T_q(\Q)$ orthogonal to $\Si$
such that the geodesic $\g_\eta$ intersects $W$.
Consider a shortest normal geodesic $\g:[0,r_0]\to \Q$ from $q$ to $W$, namely, assume that
$\g(0)=q$, $\g(r_0)=p=\pr(q)\in W$ and $L(\g)=r_0=d(q,W)$. Then it holds that
$\lf<\eta,\g'(0)\rg>\not=0.$
\end{lemma}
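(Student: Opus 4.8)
The plan is to argue by contradiction, and the key tool is Lemma \ref{trivial}, whose hypotheses fit this situation exactly. Suppose $\lan\eta,\g'(0)\ran=0$; after rescaling assume $|\eta|=1$ (and $\eta\ne 0$, since otherwise $\g_\eta$ is constant at $q\notin W$). As $q\notin V_W$ we have $0<r_0=d(q,W)<\fr\pi{2\sqrt c}$, and since $\g$ is a shortest geodesic from $q$ to $W$ it meets $W$ orthogonally at $p$, so $\g'(r_0)\in(T_pW)^\perp=T_p(S_{pW})$; hence $\g$ lies in the totally geodesic submanifold $S:=S_{pW}$. Extend $\g$ to all of $\real$. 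Because $\g_\eta$ meets $W$ while $q\notin W$, it meets $W$ at some first positive parameter $s_0$; set $p':=\g_\eta(s_0)\in W$. I claim $p'\ne\pm p$: since $0<d(q,p)=r_0<\fr\pi{\sqrt c}$, the points $q$ and $p$ lie on a unique closed geodesic, which also contains $-p$; if $p'\in\{p,-p\}$ this closed geodesic would be the image of $\g_\eta$, forcing $\eta=\pm\g'(0)$ against $\lan\eta,\g'(0)\ran=0$. Hence $0<d(p,p')<\fr\pi{\sqrt c}$, the segment $\g_\eta|_{[0,s_0]}$ is minimal ($s_0<\fr\pi{\sqrt c}$), and there is a unique minimal geodesic $\be:[0,s_*]\to\Q$ from $p$ to $p'$, which lies in $W$ because $W$ is totally geodesic.

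Next I would invoke Lemma \ref{trivial} with the geodesic $\g$, the parameters $a=0$, $b=r_0$, and $\al:=\g_\eta|_{[0,s_0]}$, $\be$ as above. Its hypotheses hold: $\al(0)=\g(0)=q$, $\be(0)=\g(r_0)=p$, with $0\le 0\le r_0\le\fr\pi{\sqrt c}$; $\lan\al'(0),\g'(0)\ran=\lan\eta,\g'(0)\ran=0$ by assumption and $\lan\be'(0),\g'(r_0)\ran=0$ because $\be'(0)\in T_pW$ while $\g'(r_0)\perp T_pW$; and $\al(s_0)=p'=\be(s_*)$. The lemma then gives that $\be'(0)$ is the parallel transport of $\eta=\al'(0)$ along $\g$ from $q$ to $p$.

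Finally I would play this off the submersion hypothesis. Since $\g$ lies in the totally geodesic submanifold $S$, parallel transport along $\g$ maps $T_qS$ isometrically onto $T_pS$, hence also $(T_qS)^\perp$ onto $(T_pS)^\perp$. But $\be'(0)\in T_pW=((T_pW)^\perp)^\perp=(T_pS)^\perp$, so transporting back along $\g$ yields $\eta\in(T_qS)^\perp$. On the other hand, by Lemma \ref{piw} the kernel of $(d\pr)_q$ equals $T_qS$, so $\pr|_\Si$ being a submersion at $q$ means $(d\pr)_q(T_q\Si)=T_pW$, i.e. $T_q\Si+T_qS=T_q(\Q)$, equivalently $(T_q\Si)^\perp\cap(T_qS)^\perp=\{0\}$. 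Since $\eta\perp\Si$ we get $\eta\in(T_q\Si)^\perp\cap(T_qS)^\perp=\{0\}$, contradicting $|\eta|=1$; therefore $\lan\eta,\g'(0)\ran\ne 0$.

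The one genuinely delicate point is the first paragraph: the correct third side of the geodesic triangle $q,p,p'$ must be taken \emph{inside} $W$, and it is then automatically orthogonal to $\g$ at $p$ precisely because $\g$ is a normal geodesic to $W$; with that in place Lemma \ref{trivial} applies verbatim, and the only remaining ingredient is the observation that parallel transport along a curve in a totally geodesic submanifold respects the tangent/normal splitting. Everything else — the unit-speed normalization, the verification that $p'\ne\pm p$, and the translation of the submersion hypothesis into an orthogonality statement — is routine.
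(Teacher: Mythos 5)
Your proof is correct and follows essentially the same route as the paper's: assume $\lan\eta,\g'(0)\ran=0$, rule out $p'=u$ coinciding with $p$, apply Lemma \ref{trivial} to identify the parallel transport $P(\eta)$ with the initial velocity of a minimal geodesic in $W$, and then contradict the submersion hypothesis by a dimension count. The only (harmless) difference is in the last step: the paper decomposes $T_q\Si=(T_qS\cap T_q\Si)\oplus V$ and derives $\mu'(0)=0$, while you dualize the same count to $(T_q\Si)^\perp\cap(T_qS)^\perp=\{0\}$ and derive $\eta=0$.
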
\begin{proof} Consider the totally geodesic sphere $S=S_{pW}$ as in Lemma \ref{piw}.
Since $\g'(r_0)\in (T_pW)^\perp=T_pS$ it follows that
the image of $\g$ is contained in $S$, hence $q\in S$.

Since $\g_\eta$ intersects $W$
and $q=\g_\eta(0)\notin W$ we have easily that $\eta\not=0$. Without loss of
generality we will assume that $|\eta|=1$.
The intersection between the image of $\g_\eta$ and $W$
occurs in two antipodal points, hence there exists $0<s_0<\fr{\pi}{\sqrt{c}}$
such that $u=\g_\eta(s_0)\in W$.

Now we assume by contradiction that $\lf<\g_\eta'(0),\g'(0)\rg>=\lf<\eta,\g'(0)\rg>=0.$
This fact and the inequalities $0<r_0<\fr{\pi}{2\sqrt{c}}$ and
$0<s_0<\fr{\pi}{\sqrt{c}}$ imply together that $p\not=u$.
Thus there exists a minimal normal geodesic $\mu:[0,t_0]\to W$
satisfying that $\mu(0)=p$ and $\mu(t_0)=u$. Since $\mu'(0)\in T_pW$ we have that $\mu'(0)$
is orthogonal to $\g'(r_0)$.
Since $u=\g_\eta(s_0)=\mu(t_0)$ and $\mu$ and $\g_\eta$ are minimal normal geodesics orthogonal to
$\g$,
we may apply Lemma \ref{trivial} to conclude that $\mu'(0)$ is the parallel transport of $\eta$ along $\g$.

Write $T_q\Si=(T_qS\cap T_q\Si)\oplus V$ and set $j=\dim(W)$ the dimension of $W$. Since $V\subset T_q\Si$ we have
that $V\cap T_qS=V\cap(T_qS\cap T_q\Si)=\{0\}$. Since $\pi_{_W}|_\Si$ is a submersion, we have that $(d\pr)_q(T_q\Si)=T_pW$, hence it follows from
Lemma \ref{piw} that $\dim(V)\ge j.$

Let $P:T_q(\Q)\to T_p(\Q)$ be the parallel transport
along $\g$. Since $V\subset T_q\Si$ we obtain that $\eta$ is orthogonal to the linear space $V$.
Since $P(\eta)=\mu'(0)$ we obtain that $\mu'(0)$ is orthogonal to the image $P(V)$. Since
$\mu'(0)\in T_pW$ it must be orthogonal to $T_pS$. Thus we have that
$\mu'(0)$ is orthogonal to $(P(V)+T_pS)$. Furthermore it holds that
$$P(V)\cap T_pS=P(V)\cap P(T_qS)=P(V\cap T_qS)=\{0\}.$$
We conclude that $$\dim(P(V)+T_pS)=\dim(P(V))+\dim(T_pS)\ge j+(n-j)=n,$$ hence
$P(V)+T_pS=T_p(\Q)$ and $\mu'(0)=0$. This contradicts the fact that  $|\mu'(0)|=1$. Lemma \ref{pi2}
 is proved.
\end{proof}

\begin{proof}[\bf Proof of Theorem \ref{revolution}] Let $W$ and $\Si$ be submanifolds of $\Q$ satisfying
the hypotheses of Theorem \ref{revolution}.
Our first goal is to prove that Property \ref{invariant} holds if one
of the following conditions hold:
\begin{enumerate}[(I)]
\item \label{I} $c>0$ and Property \ref{orthogonal}  holds;
\item \label {II} $c\le 0$ and Property \ref{vector} holds.
\end{enumerate}
Thus we will assume that \ref{I} or \ref{II} holds and we will prove that each sufficiently small open subset of $\Si$ is contained in a hypersurface invariant under the action of $G_W$.

Fix $q\in \Si$.
Consider a normal shortest geodesic $\g:[0,r_0]\to \Q$ from $q$ to $W$, namely, assume that
$\g(0)=q$, $\g(r_0)=p=\pr(q)\in W$ and $L(\g)=d(q,W)=r_0$.
Set $S=S_{pW}$. Since $\g'(r_0)\in S$ it follows that
the image of $\g$ is contained in $S$, hence $q\in S$.

Fix $v\in T_q\Si$ with $(d\pi_{_W})_qv=0$. By Lemma \ref{piw} we have that
$v\in T_qS$.
\begin{claim} \label{sphere} $\lf<v,\g'(0)\rg>=0$.
\end{claim}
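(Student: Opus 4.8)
The plan is to manufacture, from whichever of (I) or (II) holds, a nonzero vector $\eta\in T_q(\Q)$ which is orthogonal to $v$ and whose geodesic $\g_\eta$ meets $W$, and then to prove that the component of $\eta$ in the orthogonal decomposition $T_q(\Q)=T_qS\oplus (T_qS)^\perp$ lying in $T_qS$ is a \emph{nonzero} multiple of $\g'(0)$. Since $v\in T_qS$ (by Lemma \ref{piw}) and $\langle\eta,v\rangle=0$, this forces $\langle\g'(0),v\rangle=0$, which is the Claim. In case (I) the vector $\eta$ comes from Property \ref{orthogonal}, and it can be taken orthogonal to all of $\Si$, hence to $v$; in case (II) it comes directly from Property \ref{vector}. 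Before starting one disposes of the vacuous case $v=0$, notes that $\eta\neq 0$ (a constant geodesic cannot meet $W$ because $q\notin W$), and normalizes $|\eta|=1$, taking $\g$ unit speed as well.

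First I would choose an intersection point $u=\g_\eta(s_0)\in W$ with $s_0>0$ for which the arc $\g_\eta|_{[0,s_0]}$ is a minimal geodesic: when $c\le 0$ every geodesic is minimal, and when $c>0$ one takes $0<s_0<\fr{\pi}{\sqrt{c}}$ exactly as in the proof of Lemma \ref{pi2}, so that the arc has length less than $\fr{\pi}{\sqrt{c}}$. If it happens that $u=p$, then $\g_\eta|_{[0,s_0]}$ and $\g$ are two minimal geodesics joining $q$ to $p$; since $p$ is not antipodal to $q$ (for $c>0$ this is because $d(q,p)=r_0<\fr{\pi}{2\sqrt{c}}$, and for $c\le 0$ minimal geodesics are unique anyway), they coincide, so $\eta=\g'(0)$ and the Claim follows from $\langle v,\eta\rangle=0$. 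Thus the substantive case is $u\neq p$.

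In that case take a minimal geodesic $\mu$ in $W$ from $p$ to $u$ (which is also minimal in $\Q$), and apply Lemma \ref{triangle} to the geodesic triangle $(\g,\mu,\g_\eta|_{[0,s_0]})$ to obtain a totally geodesic surface $N^2\subset\Q$, isometric to $\Bbb Q_c^2$, containing the images of all three sides. The heart of the argument is the identification of $T_qN$. Because $N$ is totally geodesic and contains the image of $\g$, the plane $T_qN$ is the parallel transport along $\g$ of $T_pN=\mathrm{span}\{\g'(r_0),\mu'(0)\}$, these two vectors being independent since $\g'(r_0)\in T_pS=(T_pW)^\perp$ while $0\neq\mu'(0)\in T_pW$. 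Parallel transport along $\g$ sends $\g'(r_0)$ to $\g'(0)$, and — using that $\g$ lies in the totally geodesic $S$, so that the transport carries $T_pS$ to $T_qS$ and hence $T_pW=(T_pS)^\perp$ to $(T_qS)^\perp$ — it sends $\mu'(0)$ to a nonzero vector $w\in(T_qS)^\perp$ with $\langle\g'(0),w\rangle=0$. Therefore $T_qN=\mathrm{span}\{\g'(0),w\}$, and since $\eta=\g_\eta'(0)\in T_qN$, the $T_qS$-component of $\eta$ is exactly $\langle\eta,\g'(0)\rangle\,\g'(0)$; orthogonality to $v\in T_qS$ then yields $\langle\eta,\g'(0)\rangle\,\langle\g'(0),v\rangle=0$.

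It remains to rule out $\langle\eta,\g'(0)\rangle=0$. For $c>0$ this is precisely the conclusion of Lemma \ref{pi2}. For $c\le 0$ I would instead use convexity: on the Hadamard manifold $\Q$ the function $\rho=d(\cdot,W)$ is convex, so $h(s)=\rho(\g_\eta(s))$ is convex on $\real$ with $h(0)=r_0>0$ and $h(s_0)=0$; since $\rho$ is smooth near $q$ with $\nabla\rho|_q=-\g'(0)$, one has $h'(0)=-\langle\g'(0),\eta\rangle$, and if this vanished then $h$ would attain its minimum at $0$, contradicting $h(s_0)=0$. I expect the main obstacle to be the bookkeeping in the computation of $T_qN$ — keeping straight that parallel transport along $\g$ matches $\g'(r_0)$ with $\g'(0)$ and $T_pW$ with $(T_qS)^\perp$ — together with making sure the degenerate possibilities ($u=p$, and $\eta$ tangent or normal to $S$) are cleanly absorbed; once $T_qN$ is pinned down the rest is immediate.
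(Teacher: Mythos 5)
Your proof is correct and follows essentially the same route as the paper: both reduce to the geodesic triangle formed by $\g$, $\g_\eta$ and $\mu$, invoke Lemma \ref{triangle} to obtain the totally geodesic surface $N^2$, identify its tangent plane via parallel transport along $\g$, and rely on Lemma \ref{pi2} for the key non-orthogonality $\lan \eta,\g'(0)\ran\not=0$ when $c>0$. The only cosmetic differences are that you work in $T_qN^2$ rather than transporting $v$ to $p$, and that for $c\le 0$ you justify $\lan \eta,\g'(0)\ran\not=0$ by convexity of $d(\cdot,W)$ where the paper argues synthetically that $\g_\eta$ and $\mu$, both orthogonal to $\g$ in $N^2$, could not meet at $u$.
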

In fact, by using \ref{I} or \ref{II}, we may choose $\eta\in T_q\Q$ such that the geodesic $\g_\eta$ intersects $W$ and one of the
following properties holds: (a) $\eta$ is orthogonal to $T_q\Si$ and $c>0$; (b) $\eta$ is orthogonal to $v$ and $c\le 0$. Recall that $\eta\not=0$ since
$q\notin W$ and $\g_\eta(\real)$ intersects $W$. Without loss of generality we
will assume that $|\eta|=1$. If $\eta$ and $\g'(0)$ are linearly dependent Claim \ref{sphere}
follows trivially. Thus we may assume that $\eta$ and $\g'(0)$ are linearly independent.

In the case $c\le 0$ the intersection between $\g_\eta$ and $W$ occurs at a unique
point $u=\g_\eta(s_0)\in W$. If $c>0$ the intersection between $\g_\eta$ and $W$
occurs in two antipodal points, hence there exists $0<s_0<\fr{\pi}{\sqrt{c}}$
such that $u=\g_\eta(s_0)\in W$. In both cases the geodesic $\g_\eta:[0,s_0]\to
\Q$ is the unique minimal normal geodesic joining $q$ and $u$. We
have that $p\not=u$ because of the two following facts: (i) $\g$ and $\g_\eta$ are the unique
minimal normal geodesics from $q$ to $p$ and $q$ to $u$, respectively; (ii) $\eta$ and $\g'(0)$
are linearly independent. Thus we obtain that
there exists a minimal normal geodesic $\mu:[0,t_0]\to W$ with $t_0>0$, 
satisfying that $\mu(0)=p$, $\mu(t_0)=u$.

Now we assert that
\begin{equation}\label{perp}\lf<\eta,\g'(0)\rg>\not=0.
\end{equation}
In fact, if $c\le 0$ and (\ref{perp}) is false, the lines $\g_\eta$ and
$\mu$ are mutually orthogonal to $\g$ which implies that they cannot
intersect in the point $u$, which is a contradiction.  In the case $c>0$, the assertion (\ref{perp}) follows
from Lemma \ref{pi2}.

Let $P:T_q(\Q)\to T_p(\Q)$ be the parallel transport along
$\g$. We claim that
\begin{equation}\label{li}
P(\eta) \rm{\ and\ } \mu'(0) \rm{\ are\ linearly\ independent.\ }
\end{equation}
In fact, if (\ref{li}) is not true we have that $P(\eta)=\pm\mu'(0)$. Since $\mu'(0)\in T_pW$
and $\g'(r_0)\in T_pS$ it holds that $\lf<\mu'(0),\g'(r_0)\rg>=0$, hence  we
have that $$\lf<\eta,\g'(0)\rg>=\lf<P(\eta),P(\g'(0))\rg>=\lf<P(\eta),\g'(r_0)\rg>=\pm\lf<\mu'(0),\g'(r_0)\rg>=0,$$
which contradicts (\ref{perp}).

Now we assert that
\begin{equation}\label{pmu} \lf<P(v),\mu'(0)\rg>=\lf<P(v),P(\eta)\rg>=0.
\end{equation}
The equality $\lf<P(v),P(\eta)\rg>=0$ follows directly from the equality $\lf<v,\eta\rg>=0$, which follows from (a) or (b).
Since $v\in T_qS$ and $S$ is totally geodesic we obtain that $P(v)\in T_pS=(T_pW)^\perp$. This implies that $\lf<P(v),\mu'(0)\rg>=0$.

By Lemma \ref{triangle} there exists a complete totally geodesic surface $N^2$ containing
the images of $\g$, $\g_\eta$ and $\mu$. Since $\eta\in T_q(N^2)$ and $N^2$ is
totally geodesic it follows that $P(\eta)\in T_p(N^2)$. Thus (\ref{li}) implies that $P(\eta)$ and $\mu'(0)$
form a basis for $T_p(N^2)$. From (\ref{pmu}) we obtain that $P(v)$ is orthogonal to
$T_p(N^2)$, which implies that
$$\lf<v,\g'(0)\rg>=\lf<P(v),P(\g'(0))\rg>=\lf<P(v),\g'(r_0)\rg>=0,$$
since $\g'(r_0)\in T_p(N^2).$ Claim \ref{sphere} is proved.

Now we are in position to prove that \ref{invariant}  holds
under condition \ref{I} or \ref{II} above. To do this we fix $q\in \Si$. Since $\pi_{_W}|_\Si$
is a submersion and $\Si$ is of class $C^k$ with $k\ge 1$, there exists a $C^k$ diffeomorphism $h:D\times \m V\to \m U$ satisfying that  $\pi_{_W}(h(x,y))=y$ for any $(x,y)\in D\times \m V$,
where $\m U\subset \Si$ is a small open neighborhood of $q$, $\m V=\pi_{_W}(\m U)$,
  and $D$ is an open disk in $\real^{m-j}$ with $m=\dim (\Si)$ and $j=\dim(W)$. If
  $c>0$ then $\m U$ may be chosen sufficiently small so that $\m U\cap V_W=\emptyset$.

Write $q=h(x_q,p)$, where $\pi_{_W}(q)=p$. Define the $C^k$ map $\xi:\m V\to \m U$ given by $$\xi(y)=h(x_q,y).$$

\begin{claim} \label{constant} For any $y\in \m V$ and $z, \ti z\in h(D\times\{y\})$,
it holds that $d(z,W)=d(\ti z,W)$.
\end{claim}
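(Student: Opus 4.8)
\textbf{Proof proposal for Claim \ref{constant}.}

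The plan is to show that the function $y\mapsto$ "common distance to $W$ of points in the slice $h(D\times\{y\})$" is well defined and locally constant on the connected set $\m V$. The key point is that along any slice $h(D\times\{y\})=\pi_{_W}^{-1}(\{y\})\cap\m U$, the tangent space at a point $z$ consists exactly of vectors $v\in T_z\Si$ with $(d\pi_{_W})_zv=0$, so Claim \ref{sphere} applies: every such $v$ is orthogonal to $\g'(0)$, where $\g$ is the shortest geodesic from $z$ to $W$. Since the slice $h(D\times\{y\})$ lies in the single totally geodesic submanifold $S_{yW}=\pr^{-1}(\{y\})$ (by Lemma \ref{piw}), this says the slice is, infinitesimally, orthogonal to the radial geodesics emanating from $y$ inside $S_{yW}$; hence the distance to $y$ — equivalently $d(\cdot,W)$ — is constant along the slice.

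First I would fix $y\in\m V$ and work inside the totally geodesic submanifold $S=S_{yW}$, which by Lemma \ref{piw} equals $\pr^{-1}(\{y\})$ (intersected with $\Q-V_W$ when $c>0$). The slice $\Sigma_y:=h(D\times\{y\})$ is a connected $C^k$ submanifold of $\m U$ contained in $S$, and for $z\in\Sigma_y$ the tangent space $T_z\Sigma_y$ is contained in $\ker((d\pi_{_W})_z)=T_zS$ and consists of vectors $v\in T_z\Si$ with $(d\pi_{_W})_zv=0$. Then I would introduce the function $\rho:\Sigma_y\to\real$, $\rho(z)=d(z,W)=d_S(z,y)$ (the last equality because the shortest geodesic from $z$ to $W$ is the radial geodesic inside $S$ from $z$ to $y$, as established in the proof of Theorem \ref{revolution}). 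Using the first variation of arc length, for a $C^1$ curve $\beta(t)$ in $\Sigma_y$ one has $\fr{d}{dt}\rho(\beta(t))=-\lan\beta'(t),\g_t'(0)\ran$, where $\g_t$ is the unit-speed shortest geodesic from $\beta(t)$ to $y$; by Claim \ref{sphere} applied with $v=\beta'(t)$ this inner product vanishes. Hence $\rho$ is constant along every $C^1$ curve in $\Sigma_y$, and since $\Sigma_y$ is connected (being $h(D\times\{y\})$ with $D$ a disk), $\rho$ is constant on $\Sigma_y$, which is exactly the assertion.

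A couple of technical points need care. First, the differentiability of $z\mapsto\g_z'(0)$: since $q\notin W\cup V_W$ and $\m U$ is chosen small, every $z\in\Sigma_y$ lies in the region where $\exp^\perp$ is a diffeomorphism onto a neighborhood (the domain $B_W$, or all of $\nu(W)$ when $c\le 0$), so the foot-point map and the initial radial direction depend smoothly on $z$; thus the first-variation formula applies with a $C^1$ integrand and $\rho\circ\beta$ is genuinely $C^1$. Second, one should note that $d(z,W)$ is realized by a \emph{unique} minimal geodesic for $z$ in this region, so there is no ambiguity in defining $\g_z$ — this is why the hypothesis $\Si\cap V_W=\emptyset$ (when $c>0$) was imposed. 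I do not expect a serious obstacle here; the only mildly delicate step is justifying the first-variation computation, i.e. that $\rho$ restricted to the slice is differentiable with the stated derivative, which follows from the smooth dependence of the foot point on $z$ just mentioned.
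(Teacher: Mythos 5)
Your proposal is correct, and the first half is exactly the paper's argument: you observe that every vector tangent to the slice $h(D\times\{y\})$ lies in $\mathrm{Ker}\lf((d\pi_{_W})_z\rg)$, so Claim \ref{sphere} makes it orthogonal to $\g'(0)$ for the shortest normal geodesic $\g$ from $z$ to $W$. Where you diverge is in how you convert this pointwise orthogonality into constancy of $d(\cdot,W)$ on the slice. The paper simply invokes Corollary \ref{cylinder2} (with $\m A=W$), i.e.\ the Lipschitz-function machinery of Theorem \ref{cylinder}, which only needs almost-everywhere differentiability of $d(\cdot,W)$ along curves and never requires the distance function to be smooth. You instead differentiate $\rho(z)=d(z,W)=d(z,y)$ directly along a $C^1$ curve in the slice via the first variation of arc length, getting $\fr{d}{dt}\rho(\be(t))=-\lf<\be'(t),\g_t'(0)\rg>=0$, and you correctly flag the point that makes this legitimate: away from $W$ and $V_W$ the normal exponential map is a diffeomorphism, so the foot point and the radial direction depend smoothly on $z$ and there is a unique minimizing geodesic. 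Your route is more elementary and self-contained in this space-form setting (it avoids importing Theorem \ref{cylinder}); the paper's route is more uniform, since Corollary \ref{cylinder2} has already been set up for arbitrary ambient manifolds and merely Lipschitz data, so no regularity of the distance function needs to be checked. Both are valid proofs of the claim.
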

In fact, for any $x\in D$, we have that $\pi_{_W}(h(x,y))=y$, hence $\pi_{_W}(u)=y$ for any
$u\in h(D\times\{y\})$. Thus any vector $v$ tangent to $h(D\times\{y\})$ in $u$ must satisfies
that $(d\pi_{_W})_uv=0$. By Claim \ref{sphere} it holds that $\lf<v,\g'(0)\rg>=0$ where
$\g:[0,r_0]\to \Q$ is the normal shortest geodesic from $u$ to $W$, namely, it satisfies that $\g(0)=u, \g(r_0)=y$ and $L(\g)=r_0=d(u,W)$. Thus we may apply Corollary \ref{cylinder2} to conclude that
$d(z,W)=d(\ti z, W)$ for all $z, \ti z\in h(D\times\{y\})$. Claim \ref{constant} is proved.

Given $z\in \m U$, it holds that $z=h(x,\pi_{_W}(z))$ for some $x\in D$. We also have that $\xi(\pi_{_W}(z))=h(x_q,\pi_{_W}(z))$. Thus we obtain that $z$ and $\xi(\pi_{_W}(z))$ belong
to $h(D\times\{\pi_{_W}(z)\})$. Thus we conclude from Claim \ref{constant} that
\begin{equation} \label{distance}
d(z, W)=d(\xi(\pi_{_W}(z)), W).
\end{equation}

We define the $C^k$ function $r:\m V\to (0,\infty)$ given by $r(y)=d(\xi(y),W)$. Consider the following set
$$M=\bigcup_{y\in \m V}S'(y,r(y)),$$
where $S'(y,s)$ denotes the sphere on $S_{yW}$ of center $y$ and radius $s$.
\begin{claim} \label{M} The set $M$ is invariant under the action of the group $G_W$.
\end{claim}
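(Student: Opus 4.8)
The plan is to prove directly that every $g\in G_W$ satisfies $g(M)=M$, by checking that $g$ preserves each slice $S'(y,r(y))$ separately. The crucial point is that the totally geodesic submanifolds $S_{yW}$ are $G_W$-invariant.

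First I would verify that $g(S_{yW})=S_{yW}$ for every $g\in G_W$ and every $y\in W$. Since $g$ fixes $W$ pointwise we have $g(y)=y$, and for any curve in $W$ through $y$ the equality $g\circ\alpha=\alpha$ shows that $dg_y$ restricts to the identity on $T_yW$. Being a linear isometry of $T_y\Q$, $dg_y$ then maps the orthogonal complement $(T_yW)^\perp$ onto itself. Hence $g(S_{yW})$ is a complete, connected, totally geodesic submanifold of $\Q$ through $g(y)=y$ whose tangent space at $y$ is $dg_y\left((T_yW)^\perp\right)=(T_yW)^\perp$; by the uniqueness property of $S_{yW}$ recalled before Lemma \ref{piw}, this forces $g(S_{yW})=S_{yW}$. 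Consequently $g$ restricts to an isometry of $S_{yW}$ (with the induced metric) that fixes the point $y$.

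Now fix $y\in\m V$. By construction $S'(y,r(y))$ is the metric sphere in $S_{yW}$ with center $y$ and radius $r(y)$ (and whether one reads this distance intrinsically in $S_{yW}$ or as the restriction of the ambient distance is irrelevant, since $S_{yW}$ is totally geodesic). As $g|_{S_{yW}}$ is an isometry fixing $y$, it carries this sphere onto the metric sphere in $S_{yW}$ of center $g(y)=y$ and the same radius $r(y)$, that is, onto $S'(y,r(y))$ itself. Taking the union over $y\in\m V$ gives $g(M)\subseteq M$; applying the same argument to $g^{-1}\in G_W$ yields $g^{-1}(M)\subseteq M$, hence $M\subseteq g(M)$, and therefore $g(M)=M$. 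This proves the claim.

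I do not expect a genuine obstacle here; the only step requiring some care is the invariance $g(S_{yW})=S_{yW}$, which relies on the fact that a complete connected totally geodesic submanifold of $\Q$ is determined by one of its points together with the tangent space at that point. (In the case $c>0$ one may additionally note that $r(y)=d(\xi(y),W)<\frac{\pi}{2\sqrt{c}}$ because $\xi(y)\notin V_W$, so that $S'(y,r(y))\cap V_W=\emptyset$ and $M$ lies in the domain of $\pr$; but this observation is not needed for the invariance itself.)
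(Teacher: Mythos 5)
Your proof is correct and follows essentially the same route as the paper: establish $\phi(S_{yW})=S_{yW}$ for each $\phi\in G_W$ via the action of $d\phi_y$ on $(T_yW)^\perp$ together with the uniqueness of the complete totally geodesic submanifold with prescribed tangent space, then observe that $\phi|_{S_{yW}}$ is an isometry fixing $y$ and hence preserves the metric sphere $S'(y,r(y))$. The only (harmless) difference is that you additionally run the argument for $\phi^{-1}$ to get the equality $\phi(M)=M$, whereas the paper records only the inclusion $\phi(S'(y,r(y)))\subset S'(y,r(y))$.
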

In fact, fix an isometry $\phi\in G_W$ and $y\in \m V$. For $w\in T_yW$ and $v\in T_y(S_{yW})=(T_yW)^
\perp$ we have
that
$$\lf<d\phi_yv,w\rg>=\lf<d\phi_yv,d\phi_yw\rg>=\lf<v,w\rg>=0,$$
hence $d\phi_y(T_y(S_{yW}))\subset (T_yW)^
\perp=T_y(S_{yW})$ and, by an argument on dimension we conclude that
$d\phi_y(T_y(S_{yW}))=T_y(S_{yW})$. From this and the fact that
 $S_{yW}$ and $\phi(S_{yW})$ are totally geodesic it follows that $\phi(S_{yW})=S_{yW}$.
 Furthermore we observe that the distance relatively to $S_{yW}$ agrees with the distance on $\Q$, since $S_{yW}$ is totally geodesic. This implies that $d(u,y)=r(y)$ for all $u\in S'(y,r(y))$, hence
 $S'(y,r(y))\subset \m S(y,r(y))$. This
 together with the fact that $\phi(S_{yW})=S_{yW}$ leads us to the conclusion that
 $$\phi(S'(y,r(y)))\subset \m S(y,r(y))\cap S_{yW}=S'(y,r(y)).$$
  Claim \ref{M} is proved.

\begin{claim} \label{subset} The set $M$ contains $\m U$.
\end{claim}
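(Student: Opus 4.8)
The plan is to take an arbitrary $z\in\m U$ and show directly that $z$ lies in the fiber sphere $S'(y,r(y))$ with $y=\pr(z)$, which by the definition of $M$ gives $z\in M$. First, since $h:D\times\m V\to\m U$ is onto, write $z=h(x,\pr(z))$ for some $x\in D$ and set $y=\pr(z)\in\m V$. Applying the already-established identity (\ref{distance}) to this particular $z$ gives $d(z,W)=d(\xi(y),W)=r(y)$, so the radius is the correct one.

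Next I would verify that $z\in S_{yW}$. This is exactly where Lemma \ref{piw} is used: from $\pr(z)=y$ we get $z\in\pr^{-1}(\{y\})$, which equals $S_{yW}$ if $c\le 0$ and $S_{yW}\cap(\Q-V_W)$ if $c>0$; either way $z\in S_{yW}$. (Here one uses $\Si\cap W=\emptyset$ so that $z\notin W$ and $\pr$ is defined at $z$, and, when $c>0$, that $\m U$ was chosen small enough that $\m U\cap V_W=\emptyset$.)

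Finally I would compute $d(z,y)$. By the defining property of $\pr$, the unique minimal geodesic from $z$ to $W$ terminates at $\pr(z)=y$ and has length $d(z,W)$; hence $d(z,y)=d(z,W)=r(y)$. Since $S_{yW}$ is totally geodesic, distances in $S_{yW}$ coincide with distances in $\Q$, so $z$ is a point of $S_{yW}$ at distance $r(y)$ from $y$, i.e. $z\in S'(y,r(y))\subset M$. As $z\in\m U$ was arbitrary, $\m U\subset M$.

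I do not expect any genuine obstacle here: all the substantive work was done in Claim \ref{sphere} (via Corollary \ref{cylinder2}) and Claim \ref{constant}, and this last step merely unwinds the definitions of $\pr$, $S_{yW}$, $r$ and $M$. The only points requiring a moment's care are remembering to invoke Lemma \ref{piw} for the membership $z\in S_{yW}$ and to use the geodesic characterization of $\pr$ in order to identify the radius $d(z,y)$ with $r(y)$.
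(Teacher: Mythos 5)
Your proof is correct and follows essentially the same route as the paper: set $y=\pr(z)$, use (\ref{distance}) to get $d(z,y)=d(z,W)=r(y)$, and note $z\in S_{yW}$ to conclude $z\in S'(y,r(y))\subset M$. The only difference is that you spell out via Lemma \ref{piw} the membership $z\in S_{yW}$, which the paper dismisses as clear.
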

In fact, take $z\in \m U$. Set $y=\pi_{_W}(z)$.
To prove Claim \ref{subset} it suffices to prove that $z\in S'(y,r(y))$. Clearly we have that $z\in S_{yW}$. By (\ref{distance})
we obtain that
$$d(z,y)=d(z,\pi_{_W}(z))=d(z,W)=d(\xi(\pi_{_W}(z)),W)=d(\xi(y),W)=r(y).$$
Claim \ref{subset} is proved.
\begin{claim}\label{embedded} The set $M$ is an embedded hypersurface of class $C^k$.
\end{claim}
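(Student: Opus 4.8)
The plan is to realize $M$ locally as a graph over the $C^k$ submanifold $\Si$ (or rather over a piece of it built from $\xi$ and the disk directions $D$), using the already-established identities. First I would fix $y_0\in\m V$, set $p_0=\xi(y_0)$, and observe that by Claim \ref{subset} the neighbourhood $\m U\subset\Si$ is contained in $M$; since $\Si$ is an embedded $C^k$ submanifold of dimension $m$, it suffices to upgrade this to the statement that $M$ coincides with an embedded $C^k$ hypersurface near each of its points. The natural parametrization is
$$F:D\times\m V\to\Q,\qquad F(x,y)=\exp^\perp_{\,y}\!\bigl(r(y)\,\nu(x,y)\bigr),$$
where $\nu(x,y)\in (T_yW)^\perp=T_y(S_{yW})$ is a unit vector depending $C^k$-smoothly on $(x,y)$; the point is that $S'(y,r(y))$ is, by Lemma \ref{piw} and the discussion preceding it, exactly the image under $\exp^\perp_y$ of the sphere of radius $r(y)$ in the fibre $(T_yW)^\perp$, and $r$ is $C^k$ by construction. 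Choosing $\nu$ to agree along $x\mapsto h(x,y)$ with the actual unit normal geodesic directions from the points of $\Si$ to $W$ (which vary $C^k$-smoothly because $\pi_{_W}|_\Si$ is a $C^k$ submersion and, in the case $c>0$, $\m U\cap V_W=\emptyset$ keeps us away from the cut locus), one gets that $F$ restricted to $D\times\{y\}$ parametrizes an open piece of $S'(y,r(y))$ containing the slice $h(D\times\{y\})$ of $\Si$; the remaining sphere directions are filled in by letting $\nu(\cdot,y)$ range over a neighbourhood in the unit sphere of $(T_yW)^\perp$, which has the correct dimension $n-j-1$, so that altogether $F$ has domain of dimension $(n-j-1)+j=n-1$.

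Next I would check that $F$ is an immersion at the relevant points and injective on a small neighbourhood, hence a $C^k$ embedding onto its image by the inverse function theorem. Immersivity: along the $\m V$-directions, $dF$ surjects onto the tangent space of $W$ under $d\pi_{_W}$ (since $\pi_{_W}\circ F(x,y)=y$), while along the sphere directions $dF$ is injective because $\exp^\perp_y$ restricted to the open ball $B_W\cap\nu(W)_y$ is a diffeomorphism onto its image (for $c>0$ this is exactly the stated property $\exp^\perp|_{B_W}:B_W\to\Q-V_W$ is a diffeomorphism, and for $c\le 0$ it is the global diffeomorphism $\exp^\perp$); the two families of directions are complementary because one maps isomorphically to $T_yW$ and the other to $(T_yW)^\perp$. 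This gives that near $F(x_0,y_0)$ the set $M$ is an embedded $C^k$ hypersurface. Finally, since $M$ is a union of the $C^k$ family $\{S'(y,r(y))\}_{y\in\m V}$ and we have just shown it is locally a $C^k$ hypersurface near every point of the form $F(x,y)$ — and every point of $M$ is of this form — the set $M$ is globally an embedded hypersurface of class $C^k$, possibly after shrinking $\m V$ and $D$ so that the local embeddings are compatible (which is automatic because they are all restrictions of the single map $F$).

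I expect the main obstacle to be the bookkeeping needed to see that $F$ is globally injective on $D\times\m V$ after an appropriate shrinking, and in the case $c>0$ that the whole construction stays within $\Q-V_W$ so that $\exp^\perp$ remains a diffeomorphism and $\pi_{_W}$ remains defined and $C^k$; both are handled by shrinking $\m U$ (hence $\m V$ and $D$) around $q$, exactly as was already done when invoking $\m U\cap V_W=\emptyset$, but it requires care that the radius function $r$ stays in $(0,\pi/(2\sqrt c))$ and bounded away from the endpoints. A secondary, more cosmetic point is matching the smooth unit-normal field $\nu$ on $D\times\{y\}$ with the geodesic directions coming from $\Si$ so that the inclusion $\m U\subset M$ is visibly realized inside the image of $F$; this follows from uniqueness of the shortest normal geodesic from each point of $\m U$ to $W$ together with the $C^k$-dependence on the foot point.
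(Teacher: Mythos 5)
Your proposal is correct and essentially reproduces the paper's argument: the paper packages your local parametrizations globally as the single $C^k$ map $\psi(y,v)=\exp^\perp(y,r(y)v)$ on the unit normal bundle $\nu_1(\m V)$, together with the explicit inverse $\varphi$ built from $(\exp^\perp)^{-1}$, which settles in one stroke the injectivity and subspace-topology points you handle by gluing local charts. The only slip to correct is notational: the sphere-direction parameter must live in a chart of the unit sphere of $(T_yW)^\perp$ (dimension $n-j-1$), not in the disk $D\subset\real^{m-j}$ coming from the submersion trivialization of $\Si$, as you in fact acknowledge while reusing the symbol $D$.
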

In fact, let $\nu_1(\m V)=\{(y,v)\bigm|y\in\m V, \ v\in (T_y\m V)^\perp \mbox{ with } |v|=1\}$ denote the unit normal fiber bundle over $\m V$. We define
the $C^k$ map $\psi:\nu_1(\m V)\to \Q$ given by
$$\psi(y,v)=\exp^\perp(y,r(y)v)$$
and $\varphi:M\to \nu_1(\m V)$ given by
$$\varphi(z)=\lf(\pi_1\lf(\lf(\exp^\perp\rg)^{-1}(z)\rg),\fr{\pi_2\lf(\lf(\exp^\perp\rg)^{-1}(z)\rg)}
{\lf|\pi_2\lf(\lf(\exp^\perp\rg)^{-1}(z)\rg)\rg|}\rg),$$
where $\pi_1$ and $\pi_2$ are the natural projections given by $\pi_1(y,v)=y$ and $\pi_2(y,v)=v$.
It is clear that $\psi((y,v))\subset S_{yW}$ and $d(\psi((y,v)),y)=r(y)$, hence
 we have that $\psi((y,v))\in S'(y,r(y))$. Thus we obtain that $\psi(\nu_1(\m V))\subset M$.
 Furthermore we have that $\varphi$ is the restriction of a $C^\infty$
map defined in $\Q-W$ in the case $c\le 0$ and defined in $\Q-(W\cup V_W)$ in
the case $c>0$.

It is straightforward to show that $\varphi(\psi(y,v))=(y,v)$. We will show that
$\psi(\varphi(z))=z$. Set $$y=\pi_1\lf(\lf(\exp^\perp\rg)^{-1}(z)\rg) \ \mbox{ and } \ v=\fr{\pi_2\lf(\lf(\exp^\perp\rg)^{-1}(z)\rg)}{\lf|\pi_2\lf(\lf(\exp^\perp\rg)^{-1}(z)\rg)\rg|}.$$
With this notation we have that
$\varphi(z)=(y,v)$.
Note that $$\pi_{_W}(z)=\pi_1\lf(\lf(\exp^\perp\rg)^{-1}(z)\rg)=y.$$ By (\ref{distance}) we have that
\begin{equation*}
\lf|\pi_2\lf(\lf(\exp^\perp\rg)^{-1}(z)\rg)\rg|=d(z,W)=d(\xi(\pi_{_W}(z)),W)=r(\pi_{_W}(z))=r(y),
\end{equation*}
which implies that $r(y)v=\pi_2\lf(\lf(\exp^\perp\rg)^{-1}(z)\rg)$. Thus we have that
\begin{eqnarray*}
\psi(\varphi(z))&=& \psi(y,v)=\exp^\perp(y,r(y)v)=\exp^\perp\lf(\pi_1\lf(\lf(\exp^\perp\rg)^{-1}(z)\rg), r(y)v\rg)\\&=& \exp^\perp\lf(\pi_1\lf(\lf(\exp^\perp\rg)^{-1}(z)\rg), \pi_2\lf(\lf(\exp^\perp\rg)^{-1}(z)\rg)\rg)\\&=& z.
\end{eqnarray*}
We conclude that $M=\psi(\nu_1(\m V))$ and $\psi$ is a $C^k$-diffeomorphism, hence $M$ is an embedded hypersurface of $\Q$ of class $C^k$. Claim \ref{embedded} is proved.

It follows from Claim \ref{M}, Claim \ref{subset} and Claim \ref{embedded} that Property \ref{invariant} holds. 

To finishes the proof of Theorem \ref{revolution} we need to prove that \ref{invariant} implies \ref{orthogonal} in the case $c\ge 0$ and that \ref{invariant} implies \ref{vector} in the 
case $c<0$. 

From now on let us assume that \ref{invariant} holds. We fix $q\in \Si$
and take a small neighborhood $\m U$ of $q$ in $\Si$ contained in an embedded $C^1$ hypersurface $M$ in $\Q$ that is invariant under the action of $G_W$.

Let $\g:[0,r_0]\to \Q$ be the normal shortest geodesic from $q$ to $W$, namely, assume that $\g(0)=q,\  \g(r_0)=p\in W$ and $L(\g)=r_0=d(q,W)$, hence $p=\pi_{_W}(q)=\g(r_0)$.  Set $S=S_{pW}$. Let
$S'=S'(p,r_0)$ be the sphere on $S$ of center $p$ and radius $r_0$. Since $M$ is invariant under the action of $G_W$ it is not difficult to see that $S'\subset M$. 

\begin{claim} \label{glinha} $\g'(0)\notin T_qM$.
\end{claim}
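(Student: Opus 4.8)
The plan is to prove Claim \ref{glinha} by contradiction: assume $\g'(0)\in T_qM$ and derive that $M$ cannot be a hypersurface near $q$, or that it contradicts the invariance under $G_W$. The key geometric fact I would exploit is that $S'=S'(p,r_0)$ is contained in $M$ (already noted in the excerpt), and that $\g$ meets $S'$ orthogonally at $q$ in the sense that $\g'(0)$ is normal to $T_qS'$ inside the totally geodesic submanifold $S=S_{pW}$. More precisely, $\g$ is a radial geodesic of the geodesic sphere $S'$ in $S$, so $\g'(0)\perp T_qS'$; since $S$ is totally geodesic in $\Q$, we in fact have $T_q S = T_qS' \oplus \real\g'(0)$, an orthogonal decomposition of the $(n-j)$-dimensional space $T_qS$.

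The first step is to understand $T_qM$. Since $M$ is $G_W$-invariant and contains the $G_W$-orbit $S'$ of $q$ (the orbit through $q$ of the isotropy group of $W$ acting on $S$ is exactly $S'$, because $G_W$ acts on $S=S_{pW}$ fixing $p$ and acting transitively on each geodesic sphere around $p$ in $S$), we get $T_qS'\subset T_qM$. So $T_qM$ already contains the $(n-j-1)$-dimensional space $T_qS'$. If in addition $\g'(0)\in T_qM$, then $T_qM\supset T_qS'\oplus\real\g'(0)=T_qS$, so $T_qM$ contains all of $T_qS$, which has dimension $n-j$.

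The second step is to use the invariance of $M$ more globally to force $T_qM$ to be too big. Here is the mechanism: take any isometry $\phi\in G_W$; then $\phi(M)=M$, and since $\phi$ fixes $q$ only if $\phi$ fixes the geodesic $\g$ pointwise, in general $\phi(q)\in S'\subset M$. Consider instead the following. The orbit of the whole hypersurface piece: for each $y\in\m V$ near $p$, the sphere $S'(y,r(y))$ lies in $M$ (this is how $M$ was built, though here we only assume $M$ is \emph{some} invariant hypersurface containing $\m U$, so I must instead argue directly). The cleaner route: since $\g'(0)\in T_qM$, the geodesic $\g$ is tangent to $M$ at $q$; I would show that $G_W$-invariance then forces $\g$ to stay in $M$, i.e. $\g([0,r_0))\subset M$, hence $p=\g(r_0)\in \bar M\cap W$. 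But for each point $\g(t)$ with $t<r_0$ close to $r_0$, the full geodesic sphere $S'(\pi_{_W}(\g(t)),\,d(\g(t),W))=S'(p, r_0-t)$ lies in $M$ by invariance; these spheres, together with $\g$, sweep out an open neighborhood of $p$ in $S$, so $S$ itself (locally near $p$) is contained in $M$. Since $\dim S=n-j$ and near $p$ also the sphere $S'=S'(p,r_0)$ of dimension $n-j-1$ plus a transverse direction sit in $M$, a dimension count gives $\dim M\ge n-j + (\text{something})$; the decisive point is that a neighborhood of $p$ in $S$ has dimension $n-j$, and this neighborhood can be pushed off by a curve in $W$-directions inside $M$ (using that $\pr|_\Si$ is a submersion, so $T_q\Si$ surjects onto $T_pW$, giving $j$ extra independent directions in $T_qM$ beyond $T_qS$), yielding $\dim T_qM\ge (n-j)+j=n$, contradicting that $M$ is a hypersurface.

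The main obstacle I anticipate is the rigorous justification of the claim ``$\g'(0)\in T_qM$ forces a neighborhood of $p$ in $S$ to lie in $M$'': tangency of a geodesic to a submanifold does not in general imply containment, so the argument must genuinely use $G_W$-invariance. The right way is: $M\cap S$ is a $G_W$-invariant subset of $S$ containing $q$; since $G_W$ acts on $S$ with orbits the concentric geodesic spheres centered at $p$, any $G_W$-invariant subset of $S$ is a union of such spheres (together possibly with $\{p\}$); so $M\cap S$ is a union of geodesic spheres centered at $p$. If $\g'(0)\in T_q(M\cap S)$ — which follows once we know $M\cap S$ is a hypersurface-in-$S$ transverse issue, or directly from $\g'(0)\in T_qM\cap T_qS = T_qM\cap T_qS$ — then the radial direction is tangent to this union of spheres at $q$, which is impossible unless the union contains an interval of radii, i.e. an open subset of $S$; but then $M\supset$ open subset of $S$ near $q$, so $T_qM\supset T_qS$, and combined with the submersion hypothesis (giving $j$ further independent directions from $d\pr_q$ on $T_q\Si\subset T_qM$, transverse to $T_qS$ by Lemma \ref{piw}) we get $\dim T_qM\ge (n-j)+j=n$, the desired contradiction. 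I would present this last dimension count carefully, citing Lemma \ref{piw} for ${\rm Ker}((d\pr)_q)=T_qS$ and hence for the transversality of the $W$-directions to $T_qS$.
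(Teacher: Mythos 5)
Your proposal is correct and, at its core, is the paper's own argument: the paper uses the submersion hypothesis to compute $\dim\mathrm{Ker}(d(\pi_{_W}|_M)_q)=n-1-j$ exactly, notes that $T_qS'$ (with $\dim T_qS'=n-j-1$) already fills this kernel, and concludes that the orthogonal vector $\g'(0)$ cannot also lie in $T_qM$ --- which is the same rank--nullity count you phrase as $\dim T_qM\ge(n-j)+j=n$. The detours in your third paragraph (forcing $\g$ to stay in $M$, the union-of-spheres analysis of $M\cap S$) are unnecessary and not fully rigorous as stated, but they are also not needed: your second paragraph already gives the direct route $T_qS'\oplus\real\,\g'(0)=T_qS\subset T_qM$, and combining it with your final dimension count yields a complete proof matching the paper's.
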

In fact, since $\pi_{_W}|_{\Si}$ is a submersion we obtain that $$T_pW=d(\pi_{_W})_q (T_q(\Si))\subset d(\pi_{_W})_q (T_q M),$$ hence $d(\pi_{_W})_q(T_qM)=T_qW$. Furthermore we have that
\begin{equation}\label{kernel} T_q(S')\subset T_qS\cap T_qM={\rm{Ker}}(d\pi_{_W})_q\cap T_qM={\rm{Ker}}(d(\pi_{_W}|_M)_q).\end{equation}
We obtain that
$\dim(\rm{Ker}(d(\pi_{_W}|_M)_q))=\dim(M)-\dim(W)=n-1-j,$ where $j=\dim(W)$. Since $\dim(S')=\dim(S)-1=n-j-1$, we obtain from (\ref{kernel}) that 
\begin{equation}\label{kernelM}
\rm{Ker}(d(\pi_{_W}|_M)_q))=T_q(S').
\end{equation}
Now assume by contradiction that  $\g'(0)\in T_qM$. Since $\g'(0)\in \rm{Ker}(d\pi_{_W})_q$ and
 is orthogonal to $T_qS'$ we have that $\dim(\rm{Ker}(d(\pi_{_W}|_M)_q))\ge 1+\dim(S')$, which
is a contradiction. Claim \ref{glinha} is proved.
\begin{claim}\label{AimpliesC} \ref{invariant} implies \ref{vector}.
\end{claim}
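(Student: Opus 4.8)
The plan is to verify property~\ref{vector} at an arbitrary point $q\in\Si$ by writing down an explicit $\eta$, namely $\eta=\g'(0)$, where $\g:[0,r_0]\to\Q$ is the shortest normal geodesic from $q$ to $W$ fixed above, $p=\g(r_0)=\pr(q)\in W$, $S=S_{pW}$, $S'=S'(p,r_0)$, and $M$ is the $G_W$-invariant embedded $C^k$ hypersurface containing a neighborhood $\m U$ of $q$ in $\Si$, so that $S'\subset M$. Since $\Si\cap W=\emptyset$ we have $r_0>0$, hence $\eta\ne 0$, and the geodesic $\g_\eta=\g$ already meets $W$ at $p=\g(r_0)$. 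Therefore the entire content of Claim~\ref{AimpliesC} reduces to showing that $\lf<\g'(0),v\rg>=0$ for every $v\in T_q\Si$ with $(d\pr)_qv=0$.

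First I would place such a vector $v$ inside $T_q(S')$. By Lemma~\ref{piw} the hypothesis $(d\pr)_qv=0$ is equivalent to $v\in{\rm Ker}\,(d\pr)_q=T_qS$. On the other hand $\m U$ is open in $\Si$ and contained in $M$, so $v\in T_q\Si=T_q\m U\subset T_qM$. Hence $v\in T_qS\cap T_qM$, and by the identity~\eqref{kernelM} this intersection equals $T_q(S')$; thus $v\in T_q(S')$. Note that only~\eqref{kernelM}, which was established on the way to Claim~\ref{glinha}, is used here, not the conclusion of Claim~\ref{glinha} itself.

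Next I would apply the Gauss lemma inside $S$. The submanifold $S$ is complete and totally geodesic, hence isometric to the space form $\Bbb Q_c^{\,n-j}$ with $j=\dim(W)$, and $S'$ is its metric sphere of center $p$ and radius $r_0$; when $c>0$ the condition $q\notin V_W$ guarantees $r_0<\pi/(2\sqrt c)$, so $S'$ is an honest smooth sphere. Since $\g$ is precisely the radial geodesic of $S$ issuing from $p$ and passing through $q$, it meets $S'$ orthogonally at $q$, i.e. $\g'(0)$ is orthogonal to $T_q(S')$, hence to $v$. This produces the required vector $\eta=\g'(0)\in T_q(\Q)$, orthogonal to $v$, whose geodesic intersects $W$; as $q\in\Si$ was arbitrary, \ref{invariant} implies \ref{vector}.

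I do not anticipate a genuine obstacle: the argument is short, and the only two points needing care are the identification $T_qS\cap T_qM=T_q(S')$---for which~\eqref{kernelM} is exactly designed---and the orthogonality of a radial geodesic to a metric sphere, which is the Gauss lemma applied within the constant-curvature submanifold $S$ (with the minor caveat, in the case $c>0$, of checking $r_0<\pi/(2\sqrt c)$ so that $S'$ is smooth).
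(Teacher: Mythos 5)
Your argument is correct and follows the paper's own proof essentially verbatim: both take $\eta=\g'(0)$, use \eqref{kernelM} to place $v$ in $T_q(S')$, and conclude orthogonality to $\g'(0)$. The only difference is that you spell out the Gauss-lemma justification for $\g'(0)\perp T_q(S')$ (and the smoothness of $S'$ when $c>0$), which the paper leaves implicit.
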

In fact, take $v\in T_q\Si$ with $(d\pi_{_W})_q(v)=0$. In particular we 
have that $v\in \rm{Ker}(d(\pi_{_W}|_M)_q))$. By (\ref{kernelM}) we have 
that $v\in T_qS'$, hence $v$ is orthogonal to $\g'(0)$. Since the geodesic 
$\g$ intersects $W$ at $p$, we conclude that 
\ref{vector} holds (by taking $\eta=\g'(0)$). Claim \ref{AimpliesC} is proved.

Let $P:T_q(\Q)\to T_p(\Q)$ be
the parallel transport along $\g$. Take $V\subset T_q(\Q)$ such that $T_pW=P(V)$.
\begin{claim}\label{direct} The vector spaces $\real \ga'(0)=\{t\,\ga'(0)\mid t\in \real\}$, $V$
and $T_qS'$ are mutually orthogonal.
\end{claim}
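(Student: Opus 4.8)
The plan is to verify the three orthogonality relations separately, relying only on three facts: that $q$ lies on $S$, that $S$ is totally geodesic, and that parallel transport is a linear isometry.

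First I would recall that, since $\g'(r_0)\in T_pS$ and $S$ is totally geodesic, the whole image of $\g$ is contained in $S$; in particular $q\in S$, $\g'(0)\in T_qS$, and $d(q,p)=r_0$, so $q\in S'=S'(p,r_0)$. Because $S$ is totally geodesic, its intrinsic distance is the restriction of $d$, so $\g$ is a minimizing geodesic of $S$ from $q$ to $p$; moreover, when $c>0$ we have $r_0=d(q,W)<\tfrac{\pi}{2\sqrt c}$, because $q\in\Q-V_W$ lies in the image of $\exp^\perp|_{B_W}$. Hence $q$ is not a cut point of $p$ inside $S$, and the metric sphere $S'$ of $S$ meets the radial geodesic $\g$ orthogonally at $q$; this gives the relation $\g'(0)\perp T_qS'$.

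Next I would prove $\real\g'(0)\perp V$. Parallel transport carries the velocity of a geodesic along itself to the velocity, so $P(\g'(0))=\g'(r_0)\in T_pS=(T_pW)^\perp$. Since $P(V)=T_pW$ and $P$ is an isometry, for every $v\in V$ we get $\lan\g'(0),v\ran=\lan P\g'(0),Pv\ran=\lan\g'(r_0),Pv\ran=0$, as desired. For the last relation $V\perp T_qS'$, I would use the standard fact that parallel transport along a curve lying in a totally geodesic submanifold preserves the tangent spaces of that submanifold: since $\g$ lies in $S$ and $T_qS'\subset T_qS$, we have $P(T_qS')\subset T_pS=(T_pW)^\perp$, hence $P(T_qS')$ is orthogonal to $T_pW=P(V)$, and applying $P^{-1}$ yields $T_qS'\perp V$. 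Together the three relations prove the claim; as a byproduct, since $1+\dim V+\dim T_qS'=1+j+(n-j-1)=n$, one also obtains the orthogonal splitting $T_q(\Q)=\real\g'(0)\oplus V\oplus T_qS'$.

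I expect no serious obstacle: every step is essentially forced once one knows $q\in S$ and that $S$ is totally geodesic, and the arguments for the three relations are short. The only point that needs a little care is the first relation, where one must know that $\g$ is minimizing inside $S$ and that $q$ is not a cut point of $p$ in $S$, so that the metric sphere $S'$ is smooth at $q$ and meets $\g$ orthogonally there; this is precisely where the standing hypotheses $\Si\cap W=\emptyset$ and (for $c>0$) $\Si\cap V_W=\emptyset$ enter.
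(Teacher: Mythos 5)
Your proof is correct and follows essentially the same route as the paper: orthogonality of $V$ to $T_qS=\real\g'(0)+T_qS'$ via the fact that parallel transport along $\g\subset S$ is an isometry carrying $T_qS$ to $T_pS=(T_pW)^\perp$, plus the Gauss-lemma fact that the radial geodesic meets the metric sphere $S'$ orthogonally. You merely split the paper's one-line argument into its three constituent verifications and add the (correct) justification that $r_0<\tfrac{\pi}{2\sqrt c}$ when $c>0$, which the paper leaves implicit.
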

In fact, since $S$ is totally geodesic and $T_pW$ is orthogonal to $T_pS$ it follows that $V=P^{-1}(T_pW)$ is orthogonal
to $T_qS=\real\g'(0)+T_qS'$. And clearly we have that $\g'(0)$ is orthogonal to $T_qS'$.

\begin{claim}\label{AimpliesB} \ref{invariant} implies \ref{orthogonal} if $c\geq 0$.
\end{claim}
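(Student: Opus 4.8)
The plan is to establish \ref{orthogonal} by exhibiting, at the fixed point $q\in\Si$, a suitable $\eta\in T_q(\Q)$. Since $\m U$ is open in $\Si$ and contained in $M$, we have $T_q\Si\subset T_qM$, so any vector normal to $M$ at $q$ is automatically orthogonal to $\Si$; I therefore take $\eta=\nu$, a unit normal of $M$ at $q$, and reduce the claim to showing that the geodesic $\g_\nu$ meets $W$. The first step is to locate $\nu$ in the orthogonal splitting of Claim \ref{direct}. As $q\in S'\subset M$ we get $T_qS'\subset T_qM$, hence $\nu\in(T_qS')^\perp$; and since $\real\g'(0)$, $V$ and $T_qS'$ are mutually orthogonal with dimensions adding up to $1+j+(n-j-1)=n$, they span $T_q(\Q)$ and $(T_qS')^\perp=\real\g'(0)\oplus V$. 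Thus $\nu=a\,\g'(0)+w$ for some $a\in\real$ and $w\in V$; moreover $T_qM=\nu^\perp$ because both are hyperplanes and $\nu\perp T_qM$, so Claim \ref{glinha} gives $a=\lf<\nu,\g'(0)\rg>\neq0$.

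If $w=0$ then $\nu=a\,\g'(0)$ with $a\neq0$, so $\g_\nu(r_0/a)=\g(r_0)=p\in W$ and we are done; assume from now on that $w\neq0$. I then pass to the complete totally geodesic surface $N^2\subset\Q$ through $q$ with $T_qN^2=\Pi:=\real\g'(0)\oplus\real w$ (the existence of $N^2$ is the standard constant-curvature fact behind Lemma \ref{triangle}; $\g'(0)$ and $w$ are independent since $w\in V\perp\g'(0)$). Both $\g$ and $\g_\nu$ lie in $N^2$, as their initial velocities lie in $\Pi$. Let $P$ denote parallel transport along $\g$; then $T_pN^2=\real\,\g'(r_0)\oplus\real\,P(w)$, where $\g'(r_0)\perp T_pW$ and $0\neq P(w)\in P(V)=T_pW$, so $T_pN^2\cap T_pW$ is the line $\real\,P(w)$. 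Let $\mu$ be the geodesic with $\mu(0)=p$ and $\mu'(0)$ a unit generator of this line; since $W$ and $N^2$ are complete and totally geodesic, $\mu$ is contained in $W\cap N^2$.

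It now suffices to solve a two-dimensional problem inside $N^2\cong\Bbb Q^2_c$: show that the geodesic $\g_\nu$ emanating from $q$ meets the geodesic $\mu$, for then that intersection point lies in $W$. When $c>0$, $N^2$ is a round sphere and $\g_\nu$, $\mu$ are great circles, hence they meet; this is exactly where the hypothesis $c\ge0$ is used. When $c=0$, $N^2$ is a Euclidean plane: parallel transport is trivial, $\mu$ is the straight line through $p$ with direction $w$, and a one-line computation (placing $q$ at the origin with $\g'(0)$ and $w$ along the axes) gives $\g_\nu(r_0/a)=p+(r_0/a)\,w\in\mu$, where $a\neq0$ is used. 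In either case $\g_\nu$ meets $W$, so \ref{orthogonal} holds with $\eta=\nu$, completing the proof of Theorem \ref{revolution}.

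The step I expect to be the main obstacle is the reduction to dimension two: pinning down that $\nu\in\real\g'(0)\oplus V$ (which is what lets a single totally geodesic surface contain both $\g$ and $\g_\nu$) and producing a geodesic through $p$ lying in $W\cap N^2$. Once this is in place the conclusion is immediate for $c>0$ and a trivial computation for $c=0$; and one sees that the argument genuinely breaks down for $c<0$, since two geodesics of $\Bbb Q^2_c$ need not intersect, consistently with Theorem \ref{revolution}.
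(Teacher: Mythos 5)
Your proof is correct and follows essentially the same route as the paper: decompose the unit normal $\eta$ via the orthogonal splitting of Claim \ref{direct}, use Claim \ref{glinha} to get $\lf<\eta,\g'(0)\rg>\neq 0$, and pass to the totally geodesic surface $N^2$ spanned by $\g'(0)$ and the $V$-component, where $\g_\eta$ must meet the geodesic of $W$ tangent to $P(w)$. The only cosmetic differences are that you absorb the $T_qS'$-component by a dimension count rather than writing it out and killing it, and you settle the $c=0$ case by an explicit computation where the paper argues by contradiction with parallel lines.
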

In fact, take a unit vector $\eta\in (T_qM)^{\perp}$. From Claim \ref{direct} we may write $\eta=a\g'(0)+\xi+u$, with $a\in\real$, $\xi\in V$ and $u\in T_qS'$. Since $\eta$ is orthogonal
to $M$ and $T_qS'\subset T_qM$ we obtain that $u=0$. If $a=0$ then $\lf<\eta,\g'(0)\rg>=0$, hence
$\g'(0)\in T_qM$ which contradicts Claim \ref{glinha}. Thus we obtain that $a\not=0$. If
$\eta$ and $\g'(0)$ are linearly dependent, then \ref{orthogonal} holds, since
$\g$ intersects $W$, hence we are done in this case.
Thus from now on we may assume  that $\xi\not=0$.

We consider the unique totally geodesic surface $N^2$ of constant curvature $c$ such that $T_q(N^2)$
agrees with the plane generated by $\g'(0)$ and $\xi$. In particular the images of $\g$ and
$\g_\eta$ are contained in $N^2$. By construction we have that $w=P\xi\in T_pW$. Since
$N^2$ is totally geodesic and $\xi\in T_q(N^2)$ it holds that $w=P\xi\in T_p(N^2)$, hence
the image of the geodesic $\g_w$ is contained in $N^2$. If $c>0$, the images of $\g_\eta$
and $\g_w$ must intersect, since they are nontrivial geodesics of the $2$-dimensional
sphere $N^2$, which implies that \ref{orthogonal} holds. If $c=0$ and $\g_\eta$ does not intersect $\g_w$ then they are parallel to
each other. Since $\g_w$ is orthogonal to $\g'(r_0)$ we will have that $\eta$ is orthogonal
to $\g'(0)$ which contradicts the fact that $a\not=0$. This contradiction concludes the proof of Claim \ref{AimpliesB}. Theorem \ref{revolution} is proved.
\end{proof}

The following proposition was mentioned in Remark \ref{c}. 
\begin{proposition}\label{true} Property \ref{vector} is always true if $c>0$.
\end{proposition}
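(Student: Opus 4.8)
The plan is to determine, for a fixed point $q\in\Si$, exactly which initial vectors $\eta\in T_q(\Q)$ give a geodesic $\g_\eta$ meeting $W$, and then to observe that this set is a linear subspace of $T_q(\Q)$ of large enough dimension that it must contain a nonzero vector orthogonal to $v$.

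First I would dispose of the trivial case $q\in W$: then $p=\pr(q)=q$, Lemma \ref{piw} gives $v\in(T_qW)^\perp$, and any nonzero $\eta\in T_qW$ works, since $\g_\eta$ stays in $W$ and $\eta\perp v$. So assume from now on that $q\notin W$, and write $j=\dim W$ (recall $1\le j\le n-1$). Since $q\notin W$ there is a complete totally geodesic submanifold $\wi W\subset\Q$ of dimension $j+1$ that contains $W$ and passes through $q$: in the linear model $\Q\subset\real^{n+1}$ with $W=\Q\cap E$, one takes $\wi W=\Q\cap(E\oplus\real q)$, noting that $q\notin E$.

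The key step --- really the only substantive point --- is the claim: \emph{for a nonzero $\eta\in T_q(\Q)$, the geodesic $\g_\eta$ meets $W$ if and only if $\eta\in T_q\wi W$.} For the ``only if'' direction, suppose $\g_\eta$ reaches a point $w\in W$. Then $w\ne\pm q$ (indeed $-q\in W$ would force $q\in W$), so the image of $\g_\eta$ is the unique great circle through $q$ and $w$; since $\wi W$ is totally geodesic and contains both $q$ and $w$, that great circle lies in $\wi W$, hence $\eta\in T_q\wi W$. For the ``if'' direction, if $\eta\in T_q\wi W$ then $\g_\eta$ stays in $\wi W$; inside the round sphere $\wi W$ the submanifold $W$ is a totally geodesic hypersurface, and a nontrivial geodesic of a round sphere always meets a totally geodesic hypersurface of it --- working in $\real^{j+2}$, where $\wi W$ is realized as a round sphere, the $2$-plane of the great circle and the $(j+1)$-plane cutting out $W$ meet in at least a line --- so $\g_\eta$ meets $W$.

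Granting the claim, the conclusion is a dimension count. If $v=0$, pick any nonzero $\eta\in T_q\wi W$; this is possible since $\dim\wi W=j+1\ge 2$. If $v\ne 0$, the orthogonal complement $v^\perp\subset T_q(\Q)$ has codimension one, so $\dim(T_q\wi W\cap v^\perp)\ge (j+1)+(n-1)-n=j\ge 1$, and we pick a nonzero $\eta$ there. In either case $\eta\ne 0$, $\eta\perp v$, and $\g_\eta$ meets $W$, which is exactly Property \ref{vector}. (Note that the hypothesis $(d\pr)_qv=0$ is not actually needed when $q\notin W$.) The main obstacle is the ``only if'' half of the claim --- verifying that \emph{every} direction whose geodesic reaches $W$ already lies in $T_q\wi W$, so that the count above is valid; the subtlety to watch is the non-uniqueness of the great circle through a pair of antipodal points, which is precisely why one first rules out $-q\in W$.
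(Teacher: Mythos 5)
Your proof is correct, but it takes a genuinely different route from the paper's. The paper argues pointwise from the hypothesis $(d\pr)_qv=0$: by Lemma \ref{piw} this puts $v\in T_q(S_{pW})$ with $p=\pr(q)$, and the authors take $\eta$ to be the parallel transport of a unit vector $w\in T_pW$ along the minimal geodesic $\g$ from $q$ to $p$; since $S_{pW}$ is totally geodesic, $\eta$ is orthogonal to $T_q(S_{pW})\ni v$, and the totally geodesic $2$-sphere containing $\g$ and $\g_w$ forces $\g_\eta$ to meet $\g_w\subset W$. You instead describe the entire set of admissible directions at $q$: in the linear model the geodesics from $q$ meeting $W$ are exactly those tangent to the $(j+1)$-dimensional totally geodesic sphere $\wi W\supset W\cup\{q\}$, and a codimension count against $v^\perp$ then produces $\eta$. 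Your version buys a complete characterization and a slightly stronger statement --- you never use $(d\pr)_qv=0$, so the conclusion holds for every $v\in T_q\Si$, consistent with what Example \ref{example sphere} exhibits --- while the paper's version stays within the parallel-transport and Lemma \ref{triangle} toolkit used throughout the proof of Theorem \ref{revolution}. Two small remarks: only the ``if'' half of your key claim feeds the dimension count, so the ``only if'' half (which you single out as the main obstacle) is in fact dispensable for the proposition itself; and both arguments tacitly require $\dim W\ge 1$ (for $W$ a single point the statement fails), a hypothesis you at least make explicit.
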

\begin{proof} Fix $q\in\Si$ and $v\in T_q\Si$ with $d(\pi_{_W})_qv=0$. Thus 
it holds that  
$v\in T_q(S_{pW})$, where $p=\pi_{_W}(q)$ (see Lemma \ref{piw}). Let $\g:[0,r_0]\to S_{pW}$ be 
a normal minimizing geodesic from 
$q$ to $p$ satisfying $L(\g)=r_0=d(q,W)$. Fix a unit vector $w\in T_pW$. Let 
$\eta\in T_q(\Q)$ be given by the parallel transport of $w$ along $\g$. Since 
$S_{pW}$ is totally geodesic and $w$ is orthogonal to $T_p(S_{pW})$ we have that $\eta$ is orthogonal to $T_q(S_{pW})$, 
hence it is orthogonal to $v$.  By using again the unique totally geodesic surface $N^2$ such that $T_q(N^2)={\rm span}\{\g'(0),\xi\}$ we obtain that $\g_\eta$ intersects $\g_w$, hence 
it intersects $W$. Proposition \ref{true} is proved.  
\end{proof}
 The next proposition was mentioned in Remark \ref{sharp}.
\begin{proposition} \label{bsubmersion} Let $\Si$ be a hypersurface of $\Q$ with $c\le 0$ and $W\subset \Q$ be
a complete totally geodesic submanifold with $\Si\cap W=\emptyset$. Then the property {\rm \ref{vector}} implies that $\pi_{_W}|_\Sigma$ is a submersion.
\end{proposition}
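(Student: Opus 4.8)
The plan is to prove the contrapositive: if $\pr|_\Si$ fails to be a submersion at some point $q\in\Si$, I will exhibit a kernel vector at $q$ that violates property \ref{vector}.

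First I would translate the failure of the submersion property into linear algebra. Put $p=\pr(q)$ (this makes sense since $\Si\cap W=\emptyset$ forces $q\notin W$), $j=\dim W$, and $S=S_{pW}$. By Lemma \ref{piw}, $\mathrm{Ker}\big((d\pr)_q\big)=T_qS$ has dimension $n-j$; since $\Si$ is a hypersurface, $T_q\Si$ has codimension one in $T_q\Q$, so $\dim(T_q\Si\cap T_qS)$ equals $n-j-1$ when $T_qS\not\subseteq T_q\Si$ and $n-j$ when $T_qS\subseteq T_q\Si$, and correspondingly $(d\pr)_q(T_q\Si)$ is either all of $T_pW$ or a proper subspace of it. Hence $\pr|_\Si$ fails to be a submersion at $q$ precisely when $T_qS\subseteq T_q\Si$; from now on assume this.

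Let $\g:[0,r_0]\to\Q$ be the unit-speed minimizing geodesic from $q$ to $W$, so $\g(0)=q$, $\g(r_0)=p$, and $\g$ meets $W$ orthogonally at $p$. Since $\g$ is a geodesic of the totally geodesic submanifold $S$, we have $\g'(0)\in T_qS\subseteq T_q\Si$, and $\g'(0)\in\mathrm{Ker}\big((d\pr)_q\big)$ by Lemma \ref{piw}. Thus $v:=\g'(0)$ is an admissible choice in property \ref{vector}, which therefore yields a nonzero $\eta\in T_q\Q$ with $\langle\eta,\g'(0)\rangle=0$ and $\g_\eta$ meeting $W$. To finish, I would show that for $c\le 0$ no geodesic $\g_\eta$ issuing from $q$ can meet $W$ once $\langle\eta,\g'(0)\rangle=0$. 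The slick argument: since $c\le 0$ and $W$ is totally geodesic, hence convex, the distance function $\rho=d(\cdot,W)$ is convex on $\Q$ and differentiable at $q\notin W$ with $\nabla\rho(q)$ a unit vector collinear with $\g'(0)$; hence $t\mapsto\rho(\g_\eta(t))$ is a convex function on $\real$ with vanishing derivative $\langle\nabla\rho(q),\eta\rangle=0$ at $t=0$, so it is minimized there and never drops below $\rho(q)=r_0>0$; thus $\g_\eta$ never reaches $W$, a contradiction. An alternative that avoids differentiability of $\rho$ is to apply Lemma \ref{triangle} to the triangle $q,p,u$ with $u\in\g_\eta\cap W$: if $u=p$ then $\eta$ is collinear with $\g'(0)$ and $\langle\eta,\g'(0)\rangle\neq0$ outright, while if $u\neq p$ the triangle is nondegenerate and inside the resulting totally geodesic $N^2\cong\Bbb Q_c^2$ the geodesics $\g_\eta$ and $W\cap N^2$ both meet $\g$ orthogonally, at $q$ and at $p$ respectively, so they are disjoint when $c\le 0$ — equivalently the triangle would have two right angles and angle sum at least $\pi$, impossible.

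The only delicate point is the differentiability of $\rho$ at $q$ together with the identification of $\nabla\rho(q)$; this is standard for a closed totally geodesic submanifold of a Hadamard manifold and is in fact already built into the definition of $\pr$ recalled in the Introduction (uniqueness of the foot of the perpendicular), and if one prefers to bypass it entirely the two-dimensional comparison via Lemma \ref{triangle} is completely elementary. I do not anticipate any further obstacle: the dimension count is routine, and the contradiction is immediate once the convexity (or planar) picture is in place.
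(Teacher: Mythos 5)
Your proposal is correct and follows essentially the same route as the paper: non-surjectivity of $d(\pi_{_W}|_\Si)_q$ plus the hypersurface hypothesis forces $T_q(S_{pW})\subset T_q\Si$, so $\g'(0)$ is an admissible kernel vector, and property \ref{vector} then produces an $\eta\perp\g'(0)$ whose geodesic meets $W$, which is impossible for $c\le 0$. The only difference is cosmetic (contrapositive versus contradiction), and you supply more detail than the paper on the final step --- the convexity of $d(\cdot,W)$, or the planar argument via Lemma \ref{triangle}, either of which is a valid justification of the assertion the paper states without proof.
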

\begin{proof} Assume by contradiction that \ref{vector} holds and that $\pi_{_W}|_\Sigma$ is not a submersion. Then there exists $q\in \Si$ 
such that $d(\pi_{_W}|_\Si)_q:T_q\Si\to T_pW$ is not surjective.
Consider as above a shortest normal geodesic $\g:[0,r_0]\to \Q$ from $q$ to $W$, namely, assume that
$\g(0)=q$, $\g(r_0)=p=\pr(q)\in W$ and $L(\g)=r_0=d(q,W)$. We consider again the totally geodesic
submanifold $S_{pW}=\pi_W^{-1}(\{p\})$ (see Lemma \ref{piw}).  

 Since $d(\pi_{_W}|_\Si)_q$ is not surjective, it holds that the intersection between $\Si$ and $S_{pW}$ is not
 transversal at $q$. In fact, if $T_q\Si+T_q(S_{pW})=T_q(\Q)$ then we have by Lemma \ref{piw} that
  $(d\pi_{_W})_q(T_q\Si)=(d\pi_{_W})_q(T_q(\Q))=T_pW$, which contradicts the hypothesis 
 that $d(\pi_{_W}|_\Si)_q$ is not surjective.
 
 Since $\Si$ is a hypersurface and it does not intersect $S_{pW}$ transverselly at $q$, 
 we conclude that $T_q(S_{pW})\subset T_q\Si$, hence $\g'(0)\in T_q\Si$. 
 Since $d(\pi_{_W})_q(\g'(0))=0$, Property \ref{vector} implies that there exists a unit vector $\eta$ orthogonal to 
 $\g'(0)$ such that the geodesic $\g_\eta$ intersects $W$. However 
 the facts that $c\le 0$,  $W$ is totally geodesic and $\eta$ is orthogonal 
 to $\g'(0)$ imply together that $\g_\eta$ may not intersect $W$, which give 
 us a contradiction. Proposition \ref{bsubmersion} is proved.
  \end{proof}

\section {\bf Distance function from subsets} \begin{proof}[{\bf Proof of Theorem {\bf\ref{cylinder}}}]
Let $V$ be a neighborhood of a point $x_0$ in $\Si$ such that the restriction $f|_V:V\to M$ is an embedding and denote by $\Si'=f(V)$. Fix $p,q\in \Si'$ and consider a differentiable curve $\al:[a,b]\to \Si'$ with $\al(a)=p$ and $\al(b)=q$ parameterized by arc length. Let $\rho:[a,b]\to \real$ be given by $\rho(s)=G(\al(s))$. By using that $G$ is a Lipschitz function we have that
$$|\rho(s)-\rho(t)|=|G(\al(s))-G(\al(t))|\leq C d(\al(s),\al(t))\leq C L(\al|_{[s,t]})=C |s-t|.$$
Thus, since $\rho$ is a Lipschitz function, it must be differentiable almost everywhere and satisfy the equality $\rho(b)=\rho(a)+\int_a^b\rho'(s)ds$. We fix $s_0\in (a,b)$ such that $\rho'(s_0)$ exists.
\begin{claim}\label{clcylinder} $\rho'(s_0)=0$.
\end{claim}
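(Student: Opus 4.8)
With $s_0\in(a,b)$ as chosen, set $p:=\al(s_0)$ and write $p=f(p_0)$ with $p_0\in V$ (possible since $f|_V$ is an embedding). The plan is to apply the hypothesis of the theorem at $p_0$ to the tangent vector $v_0:=(df_{p_0})^{-1}(\al'(s_0))\in T_{p_0}\Si$: this produces a nontrivial $\eta\in T_pM$ with $\langle\eta,\al'(s_0)\rangle=\langle\eta,df_{p_0}v_0\rangle=0$ and $|G(p)-G(\ga_\eta(1))|=C\,L(\ga_\eta)$. Since $\ga_\eta$ is a constant-speed geodesic on $[0,1]$ we have $L(\ga_\eta)=|\eta|>0$, so $\ga_\eta$ is genuinely nonconstant and $|G(p)-G(\ga_\eta(1))|=C|\eta|$. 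I will first show that $G$ is \emph{affine} along $\ga_\eta$, and then use the orthogonality $\al'(s_0)\perp\eta$ to force $\rho'(s_0)=0$.

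For the affineness, put $h(t):=G(\ga_\eta(t))$. The sub-arc of $\ga_\eta$ between parameters $s$ and $t$ has length $|t-s|\,|\eta|$, so the Lipschitz condition gives $|h(t)-h(s)|\le C|\eta|\,|t-s|$. Combining this with $|h(1)-h(0)|=C|\eta|$ and the chain $|h(1)-h(0)|\le|h(1)-h(t)|+|h(t)-h(0)|\le C|\eta|(1-t)+C|\eta|t=C|\eta|$, all the inequalities must be equalities for every $t$; in particular $|h(t)-h(0)|=C|\eta|\,t$, and equality in the triangle inequality on $\real$ forces $h(t)-h(0)$ and $h(1)-h(t)$ to have a common sign, necessarily a constant $\ep\in\{-1,1\}$ since $h$ is continuous and $h(t)-h(0)\neq 0$ for $t\in(0,1]$. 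Hence $G(\ga_\eta(t))=G(p)+\ep C|\eta|\,t$ for all $t\in[0,1]$.

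The technical heart of the argument is the distance estimate: \emph{for every sufficiently small fixed $t>0$ one has $d(\ga_\eta(t),\al(s_0+u))\le t|\eta|+o(u)$ as $u\to 0$}. To prove it, fix $t>0$ small enough that $\ga_\eta|_{[0,t]}$ is the unique minimizing geodesic from $p$ to $q_t:=\ga_\eta(t)$ and $p$ does not lie in the cut locus of $q_t$; then $\phi:=d(q_t,\cdot)$ is smooth near $p$, so $F(s):=\phi(\al(s))$ is $C^1$ near $s_0$ with $F(s_0)=t|\eta|$ and, by the first variation of arclength, $F'(s_0)=\langle\na\phi(p),\al'(s_0)\rangle$. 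Now $\na\phi(p)$ is the unit vector at $p$ pointing away from $q_t$, that is $-\eta/|\eta|$, so $F'(s_0)=-|\eta|^{-1}\langle\eta,\al'(s_0)\rangle=0$, whence $F(s_0+u)=t|\eta|+o(u)$. I expect this to be the main obstacle: one has to make sure that the first-order term in the expansion of $F$ really cancels (this is exactly the orthogonality hypothesis) and that $\phi$ is differentiable near $p$, which is why $t$ must be taken small.

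Finally, suppose for contradiction that $c:=\rho'(s_0)\neq 0$. Replacing $\al(s)$ by $\al(2s_0-s)$ if needed — which preserves $\al'(s_0)\perp\eta$ and keeps the same $\eta$ admissible — we may assume $c>0$. Set $\si:=-\ep\in\{-1,1\}$ and look at $\al(s_0+\si u)$ for small $u>0$. Differentiability of $\rho$ at $s_0$ gives $G(\al(s_0+\si u))=G(p)+c\si u+o(u)$, while the affine formula gives $G(\ga_\eta(t))=G(p)-\si C t|\eta|$, so $|G(\ga_\eta(t))-G(\al(s_0+\si u))|=Ct|\eta|+cu+o(u)$. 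On the other hand, the Lipschitz bound together with the distance estimate gives $|G(\ga_\eta(t))-G(\al(s_0+\si u))|\le C\,d(\ga_\eta(t),\al(s_0+\si u))\le Ct|\eta|+o(u)$. Subtracting, $cu\le o(u)$, and letting $u\to 0^+$ forces $c\le 0$, a contradiction. Hence $\rho'(s_0)=0$, which proves the Claim (and then $\rho(b)-\rho(a)=\int_a^b\rho'(s)\,ds=0$ shows $G$ is constant on $\Si'$, hence on $f(\Si)$ by connectedness).
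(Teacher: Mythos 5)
Your proof is correct, and its engine is the same as the paper's: for a fixed small $t>0$ the function $s\mapsto d(\ga_\eta(t),\al(s))$ is differentiable at $s_0$ with derivative $\langle-\eta/|\eta|,\al'(s_0)\rangle=0$, which is exactly the paper's computation $\tfrac{d}{ds}L(h_s)|_{s=s_0}=(r\circ\al)'(s_0)=\langle-\ga'(0),\al'(s_0)\rangle=0$ with $r=d(\ga(t_0),\cdot)$ smooth on a strongly convex ball. Where you genuinely differ is in how this first-variation fact is combined with the saturated Lipschitz equality $|G(p)-G(\ga_\eta(1))|=C\,L(\ga_\eta)$. The paper keeps the far endpoint $\ga_\eta(1)$ in the picture: it builds the broken paths $h_s$ from $\al(s)$ through $\ga(t_0)$ to $\ga(1)$, obtains $\rho(s)-G(\ga(1))\le C\,L(h_s)$ for all $s$ near $s_0$ with equality at $s=s_0$, and reads off both one-sided difference quotients of $\rho$ at once, getting $\rho'(s_0)\le 0$ and $\rho'(s_0)\ge 0$ with no case analysis. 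You instead first promote the endpoint equality to the statement that $G$ is affine along $\ga_\eta$ (the equality case of the triangle and Lipschitz inequalities), which lets you discard the far endpoint entirely and argue at the near point $\ga_\eta(t)$, concluding by contradiction after two sign normalizations (reducing to $c>0$ by reversing $\al$, and choosing $\si=-\ep$). Both routes are sound and of comparable length; your affineness lemma is the extra ingredient that replaces the paper's broken-path construction, at the cost of the sign bookkeeping that the paper's two-sided sandwich avoids (indeed, running your last step with both choices $\si=\pm1$ would give $c\le 0$ and $c\ge 0$ directly and spare you the reversal of $\al$).
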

In fact, by hypothesis, there exists a nontrivial geodesic $\ga:[0,1]\to M$ satisfying
\begin{enumerate} [(i)]
\item $\ga(0)=\al(s_0)$;
\item \label{etaorthogonal} $\ga'(0)$ is orthogonal to $\al'(s_0)$;
\item \label{noponto} $C L(\ga)=|G(\al(s_0))-G(\ga(1))|=|\rho(s_0)-G(\ga(1))|$.
\end{enumerate}
Since $L(\ga)>0$ it follows that $G(\al(s_0))-G(\ga(1))\not=0$. By replacing
$G$ by $-G$ if necessary, we may assume that $G(\al(s_0))-G(\ga(1))>0$. Now we
choose $0<t_0<1$ sufficiently small so that $\al(s_0)$ is contained
in a strongly convex ball $B\subset M$ centered at $\g(t_0)$.
Choose $0<\de<\ep$ sufficiently small so that $I=(s_0-\de,s_0+\de)\subset (a,b)$, $\al([s_0-\de,s_0+\de])\subset B$ and $G(\al(s))-G(\ga(1))>0$ for all $s\in I$. Consider
the smooth map $r:B\to \real$ given by $r(x)=d(\g(t_0),x)$ and the map $h:I\times [0,1]\to M$ given by
\begin{enumerate}[(a)]
\item $h(s,t)=\exp_{\al(s)}\left(\frac{t}{t_0}\left(\exp_{\al(s)}^{-1}\ga(t_0)\right)\right)$, for  $s\in I$ and $t\in [0,t_0]$;
\item $h(s,t)=\ga(t)$, for $s\in I$ and $t\in [t_0,1]$.
\end{enumerate}
Consider the curve $h_s:t\in [0,1]\mapsto h(s,t)$. Note that $L(h_s)=L(\g|_{[t_0,1]})+r(\al(s))$.
Since $\al$ is differentiable we obtain that
\begin{eqnarray}\label{dl} \lf.\fr d{ds}L(h_s)\rg|_{s=s_0}&=&(r\circ\al)'(s_0)=\lf<\na r(\al(s_0)),\al'(s_0)\rg>
\\&=&\lf<-\g'(0),\al'(s_0)\rg>=0.\nonumber
\end{eqnarray}
Since $h_s(0)=\al(s)$ and $h_s(1)=\ga(1)$ we have that
\begin{eqnarray}\label{Lhs}
C L(h_s)&\geq& C d(\al(s),\ga(1))\geq |G(\al(s))-G(\ga(1))|= G(\al(s))-G(\ga(1))\nonumber\\&=&\rho(s)-G(\ga(1)),
\end{eqnarray}
for all $s\in I$.  Thus, using \ref{noponto}, (\ref{dl}) and (\ref{Lhs}), we obtain that
\begin{eqnarray*}
\rho'(s_0)&=&\lim_{s\to s_0 \atop{s>s_0}} \frac{\rho(s)-\rho(s_0)}{s-s_0} \leq \lim_{s\to s_0 \atop{s>s_0}} \frac{C L(h_s)+G(\ga(1))-\rho(s_0))}{s-s_0}\\ &=& \lim_{s\to s_0 \atop{s>s_0}} \frac{C L(h_s)-C L(h_{s_0})}{s-s_0}= C
\left.\frac{d}{ds}\right|_{s=s_0} L(h_s)=0
\end{eqnarray*}
and
\begin{eqnarray*}
\rho'(s_0)&=&\lim_{s\to s_0 \atop{s<s_0}} \frac{\rho(s)-\rho(s_0)}{s-s_0} \geq \lim_{s\to s_0 \atop{s<s_0}} \frac{C L(h_s)+G(\ga(1))-\rho(s_0))}{s-s_0}\\ &=& \lim_{s\to s_0 \atop{s<s_0}} \frac{C L(h_s)-C L(h_{s_0})}{s-s_0}= C
\left.\frac{d}{ds}\right|_{s=s_0} L(h_s)=0
\end{eqnarray*}
Thus it holds that $\rho'(s_0)=0$ and Claim \ref{clcylinder} is proved.

Thus, using that
$\rho(b)=\rho(a)+\int_a^b\rho'(s)ds=\rho(a)$, we obtain that $G|_{\Si'}$ is constant. Since $\Si$ is connected we have that the function $G\circ f$ is constant, which concludes the proof of Theorem \ref{cylinder}.
 \end{proof}

\subsection{Proof of Theorem \ref{hadamard}.}
Let $f:\Si\to M$ be a differentiable immersion of a connected manifold $\Si$ in a Hadamard manifold $M$.
Assume that there exists $x_0\in M({\infty})$ such that for all point $p\in \Si$ and $v\in T_p\Si$ there exists a unit vector $\eta\in T_{f(p)}M$ orthogonal to $df_pv$ satisfying that $\g_\eta(\infty)=x_0$.

Fix a ray $\al:[0,+\infty)\to M$ parameterized by arc length such that $\al(\infty)=x_0$.
We recall that a horosphere $\m H_{x_0}$ of $M$ associated with $x_0\in M(\infty)$ is a level set of the Busemann function $h_\al:M\to \real$ given by
$$h_\al(x)=\lim_{t\to +\infty}d(x,\al(t))-t.$$
It is well known that $h_\al$ is a Lipschitz function with Lipschitz constant $1$ and $h_\al(\al(s))=-s$, for all $s$. Furthermore, if $\g:[0,+\infty)\to M$ is another ray such that $\g(\infty)=x_0$ then the Busemann functions $h_\ga$ and $h_\al$ differ by a constant.

Now fix $p\in \Si$ and $v\in T_p\Si$. By hypothesis there exists a unit vector $\eta\in T_{f(p)}M$ orthogonal to $df_pv$ satisfying that $\g_\eta(\infty)=x_0$. Thus we have that $$h_\al(\ga_\eta(0))-h_\al(\ga_\eta(1))=h_{\ga_{\eta}}(\ga_\eta(0))-h_{\ga_\eta}(\ga_\eta(1))=1=
L({\ga_\eta}|_{[0,1]}).$$
Thus we can apply Theorem \ref{cylinder} with $G=h_\al$ to obtain that $G\circ f$ is constant.
Theorem \ref{hadamard} is proved.

\section{Examples}\label{examples}
The following example (see Figure \ref{fig-annulus}) shows that in the case $c\le 0$ the assumption that $\pi_{_W}|_{\Si}$ is a submersion is essential to obtain that \ref{invariant} implies \ref{vector} (compare with Proposition \ref{true}).
\begin{example}\label{annulus} Let $W$ be a complete totally geodesic submanifold of $\Q$, with $c\leq 0$. Take $S_{pW}=\pi_{_W}^{-1}(p)$, for some $p\in W$. Fix $0\le a<b\le+\infty$ and set $$\Si=\{z\in S_{pW} \bigm| a<d(z,W)<b\}.$$
It is easy to see that $\Si$ is invariant under the action of $G_W$, hence it satisfies \ref{invariant}. Fix $q\in \Si$ and $v=\g'(0)$, where $\g$ is the normal geodesic from $q$ to $p$. For any vector $\eta\in T_q(\Q)$ orthogonal to $v$, the geodesic $\g_\eta$ does not intersect $W$, hence \ref{vector} does not hold.
\end{example}

\begin{figure}
\scalebox{0.7} 
{
\begin{pspicture}(0,-2.39)(10.02,2.39)
\definecolor{color365b}{rgb}{0.8,0.8,0.8}
\psellipse[linewidth=0.04,dimen=outer,fillstyle=solid,fillcolor=color365b](5.01,-0.01)(5.01,1.12)
\psellipse[linewidth=0.04,dimen=outer,fillstyle=gradient,gradlines=2000,gradbegin=white,gradend=white,gradmidpoint=1.0](5.01,-0.016666668)(3.07,0.42518517)
\psdots[dotsize=0.12](4.96,-0.01)
\usefont{T1}{ptm}{m}{n}
\rput(0.50145507,0.015){$\Si$}
\psline[linewidth=0.04cm](4.96,-0.41)(4.96,2.37)
\psline[linewidth=0.04cm,linestyle=dashed,dash=0.16cm 0.16cm](4.96,-0.35)(4.96,-1.07)
\psline[linewidth=0.04cm](4.96,-1.01)(4.96,-2.37)
\usefont{T1}{ptm}{m}{n}
\rput(5.3114552,2.075){$W$}
\psline[linewidth=0.04cm,arrowsize=0.05291667cm 2.0,arrowlength=1.4,arrowinset=0.4]{<-}(8.24,-0.01)(9.34,-0.01)
\usefont{T1}{ptm}{m}{n}
\rput(8.291455,0.195){$v$}
\usefont{T1}{ptm}{m}{n}
\rput(9.291455,-0.225){$q$}
\psdots[dotsize=0.12](9.32,-0.01)
\end{pspicture}}
\caption{Referred in Example \ref{annulus}.}\label{fig-annulus}
\end{figure}

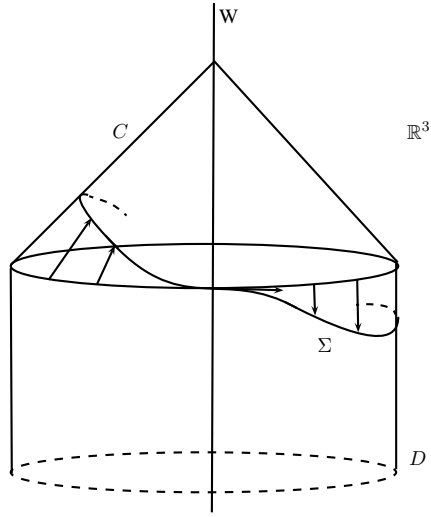
\begin{figure}
\scalebox{0.7} 
{
\begin{pspicture}(0,-4.86)(8.48335,4.86)
\psline[linewidth=0.04cm](3.88,4.84)(3.84,-4.84)
\psellipse[linewidth=0.04,dimen=outer](3.7,-0.15835927)(3.7,0.44)
\psline[linewidth=0.04cm](7.34,-0.038359273)(7.34,-4.018359)
\usefont{T1}{ptm}{m}{n}
\rput(4.1622267,4.586641){W}
\usefont{T1}{ptm}{m}{n}
\rput(7.77291,2.4266407){$\real^3$}
\psline[linewidth=0.04cm](3.88,3.74)(7.37,-0.088359274)
\psline[linewidth=0.04cm](3.8799999,3.7216406)(0.02000005,-0.13835932)
\psellipse[linewidth=0.04,linestyle=dashed,dash=0.16cm 0.16cm,dimen=outer](3.68,-4.078359)(3.68,0.4)
\psbezier[linewidth=0.04](5.22,-0.8383593)(5.88,-1.18)(7.38,-1.9383593)(7.38,-1.123265)
\psbezier[linewidth=0.04,linestyle=dashed,dash=0.16cm 0.16cm](6.58,-0.89798754)(6.8194118,-0.86)(7.32,-0.8878758)(7.32,-1.08)
\psbezier[linewidth=0.04](5.46,-0.96)(4.0,-0.14)(3.26,-1.22)(1.82,0.42)
\psbezier[linewidth=0.04,linestyle=dashed,dash=0.16cm 0.16cm](2.22,0.8)(1.94,1.06)(1.84,1.06)(1.5108463,1.1691537)
\psbezier[linewidth=0.04](2.06,0.18)(1.51,0.69164073)(1.09,1.3116407)(1.49,1.1916407)
\usefont{T1}{ptm}{m}{n}
\rput(2.10291,2.4066405){$C$}
\usefont{T1}{ptm}{m}{n}
\rput(7.75291,-3.8133593){$D$}
\usefont{T1}{ptm}{m}{n}
\rput(5.98291,-1.6533593){$\Si$}
\psline[linewidth=0.04cm,arrowsize=0.05291667cm 2.0,arrowlength=1.4,arrowinset=0.4]{<-}(5.2,-0.62)(3.86,-0.58)
\psline[linewidth=0.04cm](0.02,-0.19835937)(0.04,-4.098359)
\psline[linewidth=0.04cm,arrowsize=0.05291667cm 2.0,arrowlength=1.4,arrowinset=0.4]{<-}(1.56,0.76)(0.74,-0.42)
\psline[linewidth=0.04cm,arrowsize=0.05291667cm 2.0,arrowlength=1.4,arrowinset=0.4]{<-}(2.0,0.24)(1.66,-0.5)
\psline[linewidth=0.04cm,arrowsize=0.05291667cm 2.0,arrowlength=1.4,arrowinset=0.4]{<-}(5.8,-1.1)(5.78,-0.5)
\psline[linewidth=0.04cm,arrowsize=0.05291667cm 2.0,arrowlength=1.4,arrowinset=0.4]{<-}(6.62,-1.42)(6.6,-0.42)
\end{pspicture} 
}
\caption{Referred in Example \ref{submersion}}\label{cone}
\end{figure}

According to Theorem \ref{revolution} we have that \ref{orthogonal} implies \ref{invariant} for any space form of constant curvature $c\in\real$. However, the next example shows that without the condition that $\pi_{_W}|_\Si:\Si\to W$ is
a submersion this implication may fail.
\begin{example} \label{submersion} Consider the cone and cylinder given, respectively, by $$C=\{(x,y,z)\in\real^3\bigm| 0\le z< 1, (z-1)^2=x^2+y^2\}$$ and  $$D=\{(x,y,z)\in\real^3\bigm|z\le 0, x^2+y^2=1\},$$ and let $\Si, W\subset \real^3$ be as in Figure \ref{cone}.
More precisely, consider smooth functions $\mu,\nu:(-\ep,\ep)\to[0,+\infty)$ for some small $\ep>0$, satisfying that
\begin{equation*}\left\{
\begin{array}{ll}
\mu(t)=0 \mbox{ for all } t\le 0; & \mu(t)>0 \mbox{ for all } t>0;\\
\nu(t)>0 \mbox{ for all } t\le 0; & \nu(t)=0 \mbox{ for all } t>0.
\end{array}\right.
\end{equation*}
Consider the curve $\al(t)=(\cos(t),\sin(t),0)$, with $t\in\real$. Let $\be:(-\ep,\ep)\to \real^3$ be the smooth curve given by $$\be(t)=\al(t)+\nu(t)(-\cos(t),-\sin(t),1)+\mu(t)(0,0,-1).$$
Let $\Si$ be the image of $\be$ and $W$ the $z$-axis. It is easy to see that $\Si$ is a smooth embedded submanifold if $\ep$ is sufficiently small. We have that $\be(t)$ belongs to
 the cone $C$ if $t<0$ and to the cylinder $D$ if $t\ge 0$. Thus it is not difficult to see that $\Si$ satisfies \ref{orthogonal} in Theorem \ref{revolution}. Note that any submanifold containing $\Si$ and invariant under the $G_W$ action
should contain an open neighborhood of $\al(0)$ in the non-smooth continuous hypersurface $C\cup D$,
which implies that $\Si$ does not satisfy \ref{invariant}.
Note that $\pi_{_W}|_\Si$ is not a submersion at the point $\be(0)$, since
$$d(\pi_{_W})_{\be(0)}\be'(0)=\pi_{_W}(\be'(0))=\pi_{_W}(0,1,0)=0.$$
\end{example}
{
The following example shows that Theorem \ref{revolution} is sharp in the sense that
\ref{invariant}
does not imply $\ref{orthogonal}$ in the case $c<0$ (see Remark \ref{sharp} in the Introduction).

\begin{example}\label{examplehyperbolic} Consider the hyperbolic space $\mathbb{H}^3$ in
the half space model $\real^3_+=\{(x, y, z)\bigm|z>0\}$. Let $W=\{(0,0,z)\bigm|z>0\}$ be a  vertical (totally geodesic) line in $\mathbb{H}^3$. Let
$\Si=\{(x,y,z)\bigm|x^2+y^2=1, z>0\}\subset \hy^n$ be the cylinder of axis $W$ and Euclidean radius $1$.
We first verify that $\pi_{_W}|_\Si$ is a submersion. For this we take $q=(x,y,z)\in \Si$ and
the curve
 $\al:(0,+\infty)\to \Si$ given by $\al(t)=(x,y,t)$. We have that $\al(z)=q$ and
 $\al'(z)=(0,0,1)$. Set $\be:(0,+\infty)\to W$ given by $\be(t)=(0,0,\sqrt{1+t^2})$.
 It is easy to see that $\be(t)=\pi_{_W}(\al(t))$, hence we obtain that
 $$(d\pi_{_W}|_\Si)_q(\al'(z))=\be'(z)=\lf(0,0,\fr{z}{\sqrt{1+z^2}}\rg)\not=0,$$
hence we have that $(d\pi_{_W}|_\Si)_q:T_q\Si\to T_{\pi_{_W}(q)}W$ is surjective and
$\pi_{_W}|_\Si$ is a submersion. Since $\Si$ is a hypersurface invariant under rotations around $W$
we see that $\Si$ satisfies \ref{invariant}. Now we will verify that $\Si$ does
not satisfy \ref{orthogonal}. We choose $q=(x,y,z)\in \Si$ with $0<z\le 1$. For any unit vector $\eta$ orthogonal to $\Si$ the geodesic $\g_\eta$ will not intersect $W$ since it is contained in the Euclidean sphere
of center $(x,y,0)$ and radius $z$ (see Figure \ref{fig-cylinder}). This shows that \ref{invariant}
does not imply $\ref{orthogonal}$ in the case $c<0$.
\end{example}}
\begin{figure}[!b]
\scalebox{0.7} 
{
\begin{pspicture}(0,-2.4191992)(10.88,4.0791993)
\psline[linewidth=0.04cm](5.3,3.9808009)(5.32,-1.8191992)
\psline[linewidth=0.04cm,linestyle=dashed,dash=0.16cm 0.16cm](0.0,-1.8391992)(10.86,-1.8591992)
\psellipse[linewidth=0.04,linestyle=dashed,dash=0.16cm 0.16cm,dimen=outer](5.32,2.1208007)(3.7,0.58)
\psline[linewidth=0.04cm](9.0,2.1408007)(9.0,-1.8391992)
\psline[linewidth=0.04cm](1.64,2.1008008)(1.6,-1.8391992)
\usefont{T1}{ptm}{m}{n}
\rput(5.6461134,3.7658007){W}
\usefont{T1}{ptm}{m}{n}
\rput(1.3114551,1.9658008){$\Sigma$}
\psarc[linewidth=0.04](9.06,-1.8791993){2.16}{91.02303}{180.0}
\psline[linewidth=0.04cm,arrowsize=0.05291667cm 2.0,arrowlength=1.4,arrowinset=0.4]{->}(9.0,0.30080077)(7.62,0.26080078)
\usefont{T1}{ptm}{m}{n}
\rput(7.861455,0.4658008){$\eta$}
\usefont{T1}{ptm}{m}{n}
\rput(8.721455,3.8858008){$\hy^3$}
\usefont{T1}{ptm}{m}{n}
\rput(9.191455,0.52580076){$q$}
\usefont{T1}{ptm}{m}{n}
\rput(8.5514555,-0.014199219){$\ga_\eta$}
\psellipse[linewidth=0.04,linestyle=dashed,dash=0.16cm 0.16cm,dimen=outer](5.3,-1.8391992)(3.7,0.58)
\end{pspicture} 
}
\caption{Reffered in Example \ref{examplehyperbolic}}\label{fig-cylinder}
\end{figure}

The next example shows that Theorem \ref{revolution} may not be improved to obtain that \ref{vector} implies  \ref{invariant} in
the case $c>0$ (see Remarks \ref{c}, \ref{sharp}).
\begin{example}\label{example sphere} Consider the standard unit sphere $S^3$ and  the natural totally geodesic inclusion $S^2\subset S^3$.  Consider
on $S^2$ the image $W$ of a closed geodesic on $S^2$ (see Figure \ref{fig-sphere}). Let $\Si$ be an open subset   of
$S^2$ satisfying that $\Si\cap\{W\cup V_W\}=\emptyset$. Clearly we have
that $\pi_{_W}|_\Si:\Si\to W$ is a submersion. First we will see that
$\Si$ satisfies \ref{vector}. In fact,
fix a point
$q$ on $\Si$ and any unit vector $v\in T_p\Si$. Choose a unit vector $\eta\in T_qS^2$ orthogonal to $v$. The geodesic $\g_\eta$ must
remain contained in $S^2$, hence it will intersect $W$ and \ref{vector} holds.
Now we will see that \ref{invariant} does not hold. We observe that, since $\Si$ is an open subset of $S^2$, the union of orbits $\m V=\cup_{x\in\Si} G_W(x)$ is an open
subset of $S^3$. Thus any submanifold $M$ containing $\Si$ and invariant under the
action of $G_W$ must contain $\m V$, hence $M$ may not be a hypersurface. We conclude that $\Si$ does not satisfy \ref{invariant}.
\begin{figure}
\centering
\scalebox{0.8} 
{
\begin{pspicture}(0,-2.75)(6.6818943,2.75)
\definecolor{color117b}{rgb}{0.8,0.8,0.8}
\pscircle[linewidth=0.04,dimen=outer](2.74,0.01){2.74}
\psellipse[linewidth=0.032,dimen=outer](2.78,0.0)(0.4,2.75)
\psellipse[linewidth=0.02,linestyle=dashed,dash=0.16cm 0.16cm,dimen=outer,fillstyle=solid,fillcolor=color117b](4.7,-0.09)(0.66,0.64)
\psdots[dotsize=0.08](4.78,-0.03)
\psline[linewidth=0.032cm,arrowsize=0.05291667cm 2.0,arrowlength=1.4,arrowinset=0.4]{<-}(4.02,0.41)(4.8,-0.03)
\psline[linewidth=0.032cm,arrowsize=0.05291667cm 2.0,arrowlength=1.4,arrowinset=0.4]{<-}(4.26,-0.95)(4.8,-0.01)
\usefont{T1}{ptm}{m}{n}
\rput(3.9114552,0.255){$v$}
\usefont{T1}{ptm}{m}{n}
\rput(4.5614552,0.795){$\Sigma \subset S^2$}
\usefont{T1}{ptm}{m}{n}
\rput(5.371455,2.435){$S^2\subset S^3$}
\usefont{T1}{ptm}{m}{n}
\rput(4.5014553,-0.965){$\eta$}
\psbezier[linewidth=0.02](4.7826085,-0.03)(4.1,-1.25)(3.3,-1.35)(2.52,-1.57)
\usefont{T1}{ptm}{m}{n}
\rput(2.071455,0.115){$W$}
\usefont{T1}{ptm}{m}{n}
\rput(4.951455,-0.185){$q$}
\end{pspicture} 
}
\caption{Reffered in Example \ref{example sphere}}\label{fig-sphere}
\end{figure}
\end{example}

The example below presents a nontrivial situation where Theorem \ref{revolution} applies. 
\begin{example}\label{complex} Consider the map $f:\real^2-\{(0,0)\}\to \real^4$ given by
$f(x,y)=(x,y,e^x\cos y, e^x\sin y)$ and let $\Si$ be the image of $f$. Set $W=\{(0,0)\}\times\real^2$ and
consider the natural projection $\pi_{_W}:\real^4\to W$.
We claim that $\Si$ and $W$ satisfy the hypotheses of Theorem \ref{revolution}, and that any
plane orthogonal to $W$ at a point $p\in W-\{(0,0,1,0), (0,0,0,0)\}$ intersects $\Si$ in infinitely many
isolated points (see Remark \ref{sharp}). In fact, we first note that $\Si\cap W=\emptyset$.
We have that
\begin{equation}\label{vectors}
\fr{\p f}{\p x}=(1,0,\, e^x\cos y,\, e^x\sin y) \ \mbox{ and } \ \fr{\p f}{\p y}=(0,1,\, -e^x\sin y,\, e^x\cos y),
\end{equation}
hence $f$ is an immersion. Since $\Si$ is a
smooth graph we conclude that $\Si$ is a smooth embedded submanifold. The vectors
$$\pi_{_W}\lf(\fr{\p f}{\p x}\rg)=(0,0,\, e^x\cos y,\, e^x\sin y),\ \pi_{_W}\lf(\fr{\p f}{\p y}\rg)=(0,0,\, -e^x\sin y,\, e^x\cos y)$$
are linearly independent, hence $\pi_{_W}|_{\Si}:\Si\to W$ is a submersion. Now we will see that
Item \ref{orthogonal} in Theorem \ref{revolution} is satisfied. To obtain this it suffices to prove that
 $\lf(q+\lf(T_q\Si\rg)^\perp\rg)\cap W\not=\emptyset$, for any $q=f(x,y)\in\Si$. By a simple computation
 using (\ref{vectors}) we obtain that
 $$\lf(T_q\Si\rg)^\perp=\{(-c\,e^x\cos y-d\,e^x\sin y,c\,e^x\sin y-d\,e^x\cos y,\,c,\,d)\bigm|
c,d\in\real\}.$$
Thus we have $\lf(q+\lf(T_q\Si\rg)^\perp\rg)\cap W\not=\emptyset$ if and only if
the linear system
\begin{equation*}\left\{
\begin{array}{l}
x=c\,e^x\cos y + d\,e^x\sin y,\\
y=-c\, e^x\sin y + d\, e^x\cos y
\end{array}\right.
\end{equation*}
has a solution, and this is the case. Now, take $$p=(0,0,\al,\be)\in W-\{(0,0,1,0),(0,0,0,0)\}.$$ We will see that the plane $S_{pW}=p+W^\perp$ intersects $\Si$ at infinitely many isolated points. To see this, note that $S_{pW}=\{(u,v,\al,\be)\bigm| u,v \in \real\}.$ Thus an easy computation shows that $$S_{pW}\cap \Si=\left\{(\log(\sqrt{\al^2+\be^2}),\te+2k\pi,\al,\be)\bigm| k\in \mathbb{Z}\right\},$$ where $\te$ is any angle satisfying $\cos \te=\frac{\al}{\sqrt{\al^2+\be^2}}$ and $\sin \te=\frac{\be}{\sqrt{\al^2+\be^2}}$. Our claim is proved.
\end{example}

\end{document}